\newcommand{\Thm}[1]{Theorem~\ref{th:#1}}
\newcommand{\Prop}[1]{Proposition~\ref{prop:#1}}
\newcommand{\Cor}[1]{Corollary~\ref{cor:#1}}
\newcommand{\Conj}[1]{Conjecture~\ref{conj:#1}}
\newcommand{\Eq}[1]{\eqref{eq:#1}}
\renewcommand{\a}{\alpha}\renewcommand{\b}{\beta}
\newcommand{\dl}{\delta}
\newcommand{\te}{\theta}
\newcommand{\lm}{\lambda}\newcommand{\kp}{\kappa}
\newcommand{\bD}{\mathbb{D}}
\newcommand{\bE}{\mathbb{E}}
\newcommand{\N}{\mathbb{N}}
\newcommand{\bP}{\mathbb{P}}
\newcommand{\R}{\mathbb{R}}
\newcommand{\Z}{\mathbb{Z}}
\newcommand{\cE}{\mathcal{E}}
\newcommand{\cF}{\mathcal{F}}
\newcommand{\cH}{\mathcal{H}}
\newcommand{\cP}{\mathcal{P}}
\newcommand{\ds}{d_{\mathrm{s}}}
\newcommand{\dw}{d_{\mathrm{w}}}
\newcommand{\dsloc}{d_{\mathrm{s}}^{\mathrm{loc}}}
\newcommand{\oD}{\overline\bD}
\begin{document}

\title*{Estimates of the local spectral dimension of the Sierpinski gasket}
\author{Masanori Hino}
\institute{Masanori Hino \at Department of Mathematics, Kyoto University, Kyoto 606-8502, Japan, \email{hino@math.kyoto-u.ac.jp}}
%
%
\maketitle

\abstract*{We discuss quantitative estimates of the local spectral dimension of the two-dimensional Sierpinski gasket with respect to the Kusuoka measure. The present arguments were inspired by a previous study of the distribution of the Kusuoka measure by R.~Bell, C.-W.~Ho, and R.~S.~Strichartz [Energy measures of harmonic functions on the Sierpi\'nski gasket, \textit{Indiana Univ.\ Math.\ J.}\ \textbf{63} (2014), 831--868].}

\abstract{We discuss quantitative estimates of the local spectral dimension of the two-dimensional Sierpinski gasket with respect to the Kusuoka measure. The present arguments were inspired by a previous study of the distribution of the Kusuoka measure by R.~Bell, C.-W.~Ho, and R.~S.~Strichartz [Energy measures of harmonic functions on the Sierpi\'nski gasket, \textit{Indiana Univ.\ Math.\ J.}\ \textbf{63} (2014), 831--868].}

\section{Introduction}
Let us recall how to construct the two-dimensional Sierpinski gasket and the associated Dirichlet form. We take three points $p_1$, $p_2$, and $p_3$ in $\R^2$ that are the vertices of an equilateral triangle. Let $\psi_i$ $(i=1,2,3)$ be a contraction map from $\R^2$ to itself that is defined by $\psi_i(x)=(x+p_i)/2$, $x\in\R^2$. Denoted herein by $K$, the two-dimensional Sierpinski gasket is a unique nonempty compact subset of $\R^2$ such that $K=\bigcup_{i=1}^3 \psi_i(K)$.

Let $V_0=\{p_1,p_2,p_3\}$ and $V_n=\bigcup_{i=1}^3 \psi_i(V_{n-1})$ for $n\ge1$ inductively. Then, $\{V_n\}_{n=0}^\infty$ is an increasing sequence, and the closure of $V_*:=\bigcup_{n=0}^\infty V_n$ is equal to $K$. 
Let $S=\{1,2,3\}$, and $W_n=S^n$ for $n\in\Z_{\ge0}$. For each $w=w_1w_2\cdots w_n\in W_n$, we define a map $\psi_w\colon K\to K$ by $\psi_w=\psi_{w_1}\circ\cdots\circ\psi_{w_n}$ and a compact set $K_w$ by $K_w=\psi_w(K)$. Note that for $w=\emptyset\in W_0$, $\psi_w$ is defined as the identity map.
Let $W_*$ denote $\bigcup_{n\in\Z_{\ge0}}W_n$. For $w=w_1w_2\cdots w_m\in W_m$ and $w'=w'_1w'_2\cdots w'_n\in W_n$, we write $ww'$ for $w_1w_2\cdots w_m w'_1w'_2\cdots w'_n\in W_{m+n}$.

We write $p\sim q$ for distinct $p,q\in V_n$ if there exist $p',q'\in V_0$ and $w\in W_n$ such that $p=\psi_w(p')$ and $q=\psi_w(q')$.
The relation $\sim$ associates $V$ with a graph structure by setting $\{(p,q)\in V_n\times V_n\mid p\sim q\}$ as the set of edges. In general, let $l(X)$ denote the space of all real-valued functions on a countable set $X$. For $n\in\Z_{\ge0}$ and $f,g\in l(V_n)$, let
\[
Q_n(f,g)=\frac12\sum_{x,y\in V_n,\ x\sim y}(f(x)-f(y))(g(x)-g(y))
\]
and $Q_n(f)=Q_n(f,f)$. We regard $Q_n(f)$ as the total energy of the function $f$. The sequence $\{(5/3)^n Q_n(f|_{V_n})\}_{n=0}^\infty$ is proved to be nondecreasing for any function $f$ in $l(V_*)$. For each $g\in l(V_0)$, there exists a unique $f\in l(V_*)$ such that $f|_{V_0}=g$ and this sequence is a constant one. In this sense, $5/3$ is the correct scaling factor for $K$.
Let $C(K)$ denote the space of all continuous real-valued functions on $K$. For $f\in C(K)$, define $\cE(f)=\lim_{n\to\infty}(5/3)^n Q_n(f|_{V_n})\,(\le{+\infty})$ and
\[
  \cF=\{f\in C(K)\mid \cE(f)<\infty\}.
\]
For $f,g\in\cF$, let
\[
\cE(f,g)=\frac12\{\cE(f+g)-\cE(f)-\cE(g)\}.
\]
Then, for any finite Borel measure $\kp$ on $K$ with full support, $(\cE,\cF)$ is a strongly local regular Dirichlet form on $L^2(K,\kp)$. Here, $C(K)$ is identified with a subspace of $L^2(K,\kp)$.
This Dirichlet form has the following self-similarity: for $f\in\cF$ and $n\in\N$, $\psi_w^* f:=f\circ \psi_w$ belongs to $\cF$ for all $w\in W_n$ and it holds that
\begin{equation}\label{eq:ss}
\cE(f,f)=\sum_{w\in W_n}\left(\frac53\right)^n \cE(\psi_w^* f,\psi_w^* f).
\end{equation}

By invoking the general theory of Dirichlet forms, the energy measure $\nu_{f}$ of $f\in\cF$ is characterized by a unique finite Borel measure on $K$ such that
\[
 \int_K g(x)\,\nu_f(dx)=2\cE(f,f g)-\cE(f^2,g)\quad\text{for all }g\in \cF\cap C(K)
\]
(Note that the above definition is simpler than usual because $K$ is compact and $C(K)$ is continuously embedded in $L^2(K,\kp)$.) 
The measure $\nu_f$ does not have mass on any one-point sets.
From the self-similarity \Eq{ss} of $(\cE,\cF)$, it holds for all $f\in\cF$ and $n\in\N$ that
\[
\nu_f=\sum_{w\in W_n}\left(\frac53\right)^n \nu_{\psi_w^* f}.
\]
In particular, we have the following identity: for $f\in \cF$ and $w\in W_n$, 
\[
\nu_f(K_w)=2\left(\frac53\right)^n \cE(\psi_w^* f,\psi_w^* f).
\]
Unlike those on differentiable spaces, energy measures on fractals generally have no simple expressions that reveal their distributions. In this respect, Bell, Ho, and Strichartz~\cite{BHS14} studied the infinitesimal behaviors of energy measures. 
To introduce their study, we state several further notations and their properties.

For each $g\in l(V_0)$, there exists a unique $f\in\cF$ such that $f|_{V_0}=g$ and the sequence $\{(5/3)^n Q_n(f|_{V_n})\}_{n=0}^\infty$ is a constant one. Such $f$ is called harmonic, and the totality of harmonic functions will be denoted by $\cH$. This is three-dimensional as a real vector space. We can take functions $h_1$ and $h_2$ from $\cH$ such that 
\[
2\cE(h_i,h_j)=\begin{cases}
1& (i=j)\\
0& (i\ne j).
\end{cases}
\]
Define $\nu=(\nu_{h_1}+\nu_{h_2})/2$. This measure does not depend on the choice of $h_1$ and $h_2$ and is sometimes called the Kusuoka measure after Kusuoka~\cite{Ku89}.\footnote{Note that more general situations are considered in \cite{Ku89}.}
For all $f\in\cF$, $\nu_f$ is absolutely continuous with respect to $\nu$. The measure $\nu$ is singular with respect to not only the Hausdorff measure on $K$~\cite{Ku89} but also any self-similar measures on $K$~\cite{HN06}.
For $w\in W_*$, define
\[
c^{(w)}=\bigl(c_j^{(w)}\bigr)_{j\in S}=\left(\frac{\nu(K_{wj})}{\nu(K_w)}\right)_{j\in S}\in \R^3.
\]
Clearly, $c^{(w)}$ lies in the plane $H=\{{\,}^t(x_1,x_2,x_3)\in \R^3\mid x_1+x_2+x_3=1\}$. This vector describes the ratio of the distribution of $\nu|_{K_w}$ to one-step smaller similarities.
We are interested in how $\{c^{(w)}\}_{w\in W_n}$ are distributed in $H$. 
Let 
\[
\bD=\left\{{\,}^t(x_1,x_2,x_3)\in H\;\middle|\; \sum_{j=1}^3 \left(x_j-\frac13\right)^2<\frac{8}{75}\right\},
\]
and let $\oD$ (resp.\ $\partial\bD$) be defined similarly as above by replacing $<$ by $\le$ (resp.\ $=$).
Let $(r,\theta)$ be the polar coordinates of $\bD$ with center $^t(1/3,1/3,1/3)$. More specifically, $(r,\theta)\in [0,\sqrt{8/75})\times (-\pi,\pi]$ corresponds to
\[
\begin{pmatrix}1/3 \\ 1/3 \\ 1/3\end{pmatrix}
+\frac{r\cos\theta}{\sqrt{6}}\begin{pmatrix}-1\\2\\-1\end{pmatrix}
+\frac{r\sin\theta}{\sqrt{2}}\begin{pmatrix}1\\0\\-1\end{pmatrix}
\in \bD.
\]
We regard $r$ and $\theta$ as maps $\bD\to [0,\sqrt{8/75})$ and $\bD\to (-\pi,\pi]$, respectively. Here we set $\theta(1/3,1/3,1/3)=0$ by convention, which does not affect later discussions.
Bell, Ho, and Strichartz~\cite{BHS14} obtained the following result and posed conjectures.\footnote{In fact, $b^{(w)}:=\frac13+\frac54(c^{(w)}-\frac13)=\frac54 c^{(w)}-\frac1{12}$ is treated in~\cite{BHS14,Hi16} in place of $c^{(w)}$ (for this relation, see also \cite[Theorem~6.3]{BHS14}). \Thm{BHS14}, \Conj{BHS14}, \Thm{Hi16}, and \Thm{Psi} below are translations of their descriptions in terms of $c^{(w)}$.}
\begin{theorem}[{\cite[Theorem~6.5]{BHS14}, see also \cite[Theorem~3.2]{Hi16}}\label{th:BHS14}]{}
For all $w\in W_*$, $c^{(w)}\in \bD$. Moreover, $c^{(w)}$ can be arbitrarily close to $\partial\bD$.
\end{theorem}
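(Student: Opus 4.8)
The plan is to translate the problem into the linear algebra of how harmonic functions restrict to subcells. Since the restriction of a harmonic function to a cell is again harmonic and constants carry no energy, each pullback $\psi_j^*$ induces a linear map $T_j$ on the two-dimensional space $\cH$ modulo constants. Taking the energy-orthonormal basis $h_1,h_2$ (with $2\cE(h_i,h_j)=\delta_{ij}$) as coordinates, I would compute $T_1,T_2,T_3$ explicitly from the $\frac15$-harmonic extension rule (the value at the midpoint of an edge $p_ap_b$ being $\frac15(2f(p_a)+2f(p_b)+f(p_c))$). A direct calculation shows that in these coordinates each $T_j$ is a \emph{symmetric} matrix with eigenvalues $3/5$ and $1/5$; moreover, by the threefold rotational symmetry of $K$ the eigendirections of $T_1,T_2,T_3$ are mutually rotated by $120^\circ$, so that, in the doubled-angle description natural for symmetric matrices, their distinguished directions $\gamma_1,\gamma_2,\gamma_3$ are equally spaced around the circle.

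With $B_w=T_{w_n}\cdots T_{w_1}$ and $P_w=B_wB_w^{t}$, the identity $\nu_f(K_w)=2(5/3)^n\cE(\psi_w^*f,\psi_w^*f)$ gives $\nu(K_w)=\frac12(5/3)^n\mathrm{tr}(P_w)$, and since $P_{wj}=T_jP_wT_j$ it gives $\nu(K_{wj})=\frac12(5/3)^{n+1}\mathrm{tr}(T_j^2P_w)$. Hence
\[
c^{(w)}_j=\frac53\cdot\frac{\mathrm{tr}(T_j^2P_w)}{\mathrm{tr}(P_w)}.
\]
Only the shape of $P_w$ matters, so I would normalize $\hat P_w=P_w/\mathrm{tr}(P_w)$ and write it as $\frac12 I$ plus $\rho\,R(2\alpha)$, where $R(2\alpha)$ is the trace-free symmetric involution with distinguished direction at doubled angle $2\alpha$ and $\rho\in[0,1/2]$ measures the anisotropy of $P_w$ (so $\hat P_w$ has eigenvalues $\frac12\pm\rho$, with $\rho=1/2$ exactly when $P_w$ is singular). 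Combining the eigenstructure of $T_j^2$ with the elementary identity $\mathrm{tr}\bigl(R(\theta)R(\theta')\bigr)=2\cos(\theta-\theta')$, a short trace computation produces the key formula
\[
c^{(w)}_j-\frac13=\frac{8}{15}\,\rho\cos(\gamma_j-2\alpha).
\]

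The first assertion follows at once:
\[
\sum_{j=1}^3\Bigl(c^{(w)}_j-\frac13\Bigr)^2=\Bigl(\frac{8}{15}\Bigr)^2\rho^2\sum_{j=1}^3\cos^2(\gamma_j-2\alpha)=\frac{32}{75}\,\rho^2,
\]
where the equal spacing of $\gamma_1,\gamma_2,\gamma_3$ forces $\sum_j\cos^2(\gamma_j-2\alpha)=3/2$ independently of $\alpha$. Because each $T_j$ is invertible, $B_w$ and hence $P_w$ are nonsingular, so $\rho<1/2$ strictly and the sum is $<8/75$; that is, $c^{(w)}\in\bD$. For the second assertion I would choose $w=\underbrace{j\cdots j}_n$, so that $P_w=T_j^{2n}$ has eigenvalue ratio $3^{2n}\to\infty$ and therefore $\rho\to1/2$; then $\sum_j(c^{(w)}_j-1/3)^2\to 8/75$, showing that $c^{(w)}$ approaches $\partial\bD$.

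I expect the main obstacle to lie in the first step: checking that the $T_j$ really are symmetric in the energy-orthonormal coordinates (equivalently, self-adjoint for the energy form on $\cH$ modulo constants) and that their eigendirections are exactly equally spaced. It is precisely this $120^\circ$ symmetry that makes $\sum_j\cos^2(\gamma_j-2\alpha)$ constant in $\alpha$, and hence turns the admissible region into the perfectly round disk $\bD$ of radius $\sqrt{8/75}$ rather than some $\alpha$-dependent, non-circular shape; verifying it carefully is the crux of the argument.
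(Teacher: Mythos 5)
Your argument is correct, but note that the paper never proves this statement itself: it is imported from \cite[Theorem~6.5]{BHS14} and \cite[Theorem~3.2]{Hi16}, so the comparison has to be with those cited proofs and with how the present paper uses the result. Your route is essentially that of \cite{Hi16}, i.e.\ Kusuoka's matrix framework: represent $\psi_j^*$ on $\cH$ modulo constants by matrices $T_j$ in energy-orthonormal coordinates, so that $\nu(K_w)=\frac12(5/3)^n\mathrm{tr}(B_wB_w^t)$, and reduce everything to the shape of the positive-definite matrix $P_w$. The facts you flag as the crux do hold and are quick to verify: the harmonic function with boundary values $(2,-1,-1)$ (symmetric about $p_1$) is a $3/5$-eigenvector of $T_1$, the one with values $(0,1,-1)$ (antisymmetric) is a $1/5$-eigenvector, these two are energy-orthogonal, so $T_1$ is symmetric in your coordinates; and the $120^\circ$ rotation of $K$ acts on the two-dimensional quotient as an orthogonal map of order three, hence a planar rotation by $\pm120^\circ$, which conjugates $T_1$ into $T_2,T_3$ and gives the equal spacing of the doubled angles $\gamma_j$. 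From there your trace identity, the constancy of $\sum_j\cos^2(\gamma_j-2\alpha)=3/2$, the strict bound $\rho<1/2$ (invertibility of the $T_j$), and the choice $w=j^n$ (eigenvalue ratio $9^n$, so $\rho\to1/2$) complete both assertions; the computation $\sum_j(c_j^{(w)}-\frac13)^2=\frac{32}{75}\rho^2\le\frac8{75}$ is consistent with the radius $\sqrt{8/75}$ of $\bD$. By contrast, the proof in \cite{BHS14} works with the explicit rational maps $\Psi_j$ (quoted in this paper as \Thm{Psi}) and establishes invariance of the disk under that dynamics; that formulation is what the present paper actually needs afterwards, since its Markov chain $\{X_m\}$ is built directly from the $\Psi_j$. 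What your approach buys instead is a conceptual explanation of \emph{why} the admissible region is a perfectly round disk (the threefold symmetry forces $\alpha$-independence) and why its boundary corresponds to degenerate, rank-one matrices $P_w$ --- which is precisely the phenomenon (martingale dimension one, $\hat P_{[x]_m}$ asymptotically rank one $\nu$-a.e.) underlying \Thm{Hi16} later in the paper.
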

\begin{conjecture}[see {\cite[Conjectures~7.1 and 7.2]{BHS14}}\label{conj:BHS14}]{}
Let $\lm_m$ be the uniform probability distribution on $W_m$.
\begin{enumerate}
\item The law of $r\circ c^{(w)}$ under $\lm_m$ converges to the Dirac measure at $\sqrt{8/75}$ as $m\to\infty$.
\item The law of $\theta\circ c^{(w)}$ under $\lm_m$ converges to an absolutely continuous measure on the interval $(-\pi,\pi]$.
\end{enumerate}
\end{conjecture}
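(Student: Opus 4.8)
The plan is to convert the problem into the asymptotics of a random product of $2\times 2$ matrices and then to apply Furstenberg theory. First I would linearize the self-similar renormalization of $\nu$. Modulo constants the restriction maps $\psi_i^*$ act on the two-dimensional space $\cH_0=\cH/\R$; fixing the $2\cE$-orthonormal basis $\{h_1,h_2\}$, let $A_i$ be the matrix of $\psi_i^*|_{\cH_0}$ and, for $w=w_1\cdots w_n$, set $A_w=A_{w_1}\cdots A_{w_n}$ and $\Sigma_w=A_w^{\mathsf T}A_w=(2\cE(\psi_w^*h_i,\psi_w^*h_j))_{i,j}$. Since $\nu_f(K_w)=2(5/3)^n\cE(\psi_w^*f,\psi_w^*f)$, one gets $\nu(K_w)=(5/3)^n\tfrac12\operatorname{tr}\Sigma_w$, so writing $\hat\Sigma_w:=\Sigma_w/\operatorname{tr}\Sigma_w$,
\[
c_j^{(w)}=\tfrac53\operatorname{tr}\bigl(\hat\Sigma_w A_jA_j^{\mathsf T}\bigr),\qquad j\in S,
\]
where $\sum_{j}A_jA_j^{\mathsf T}=\tfrac35 I$ (which also forces $\sum_j c_j^{(w)}=1$). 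Thus $c^{(w)}$ is an explicit affine image of the normalized Gram matrix $\hat\Sigma_w$, which ranges over the disk of trace-one positive semidefinite $2\times 2$ matrices; by the dihedral symmetry of the construction this affine map is a similarity carrying the rank-one boundary exactly onto $\partial\bD$.

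Next I would identify the dynamics. The recursion $\Sigma_{wj}=A_j^{\mathsf T}\Sigma_w A_j$ shows that appending a uniform random letter acts on $\hat\Sigma_w$ by $\hat\Sigma\mapsto A_j^{\mathsf T}\hat\Sigma A_j/\operatorname{tr}(A_j^{\mathsf T}\hat\Sigma A_j)$, so under $\lm_m$ the law of $\hat\Sigma_w$ is that governed by the random matrix cocycle $A_{w_1}\cdots A_{w_m}$ with i.i.d.\ uniform letters. Each $A_i$ has eigenvalues $3/5$ and $1/5$ with pairwise distinct eigendirections, so the generated semigroup is noncompact; the essential structural input is to verify that it is strongly irreducible, which I would read off from the explicit harmonic extension matrices together with the threefold symmetry.

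Part (i) then follows from Furstenberg's theorem: strong irreducibility and noncompactness yield a simple top Lyapunov exponent, so the singular-value gap of $A_w$ grows exponentially and $\hat\Sigma_w$ converges --- almost surely for the time-reversed product, hence in law under $\lm_m$ --- to a rank-one projection onto the dominant singular direction. As rank-one matrices are carried onto $\partial\bD$, that is $r=\sqrt{8/75}$, the law of $r\circ c^{(w)}$ tends to the Dirac measure at $\sqrt{8/75}$.

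Part (ii) is where the genuine difficulty lies. The direction of the limiting rank-one projection is distributed according to the Furstenberg stationary measure $\mu_*$ on the projective line $P(\cH_0)$, characterized by $\mu_*=\tfrac13\sum_{j\in S}(g_j)_*\mu_*$ with $g_j$ the projective action of $A_j^{\mathsf T}$, and the limiting law of $\theta\circ c^{(w)}$ is its pushforward under the smooth diffeomorphism $P(\cH_0)\to\partial\bD$ furnished above, which preserves absolute continuity. The conjecture thus reduces to showing that $\mu_*$ is absolutely continuous. This is the main obstacle, since absolute continuity of stationary measures of matrix cocycles is in general delicate and may fail. I would try to exploit the very specific, symmetric maps $g_j$: either analyze the associated transfer (Perron--Frobenius) operator and produce an invariant density of bounded variation, or verify a transversality/Ledrappier-type criterion excluding concentration near the fixed points of the $g_j$. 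Controlling the density at those fixed points, where the projective contraction is weakest, is the step I expect to be hardest.
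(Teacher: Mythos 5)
This statement is a \emph{conjecture} in the paper, not a theorem: the paper explicitly records that part (ii) remains unsolved, while part (i) has been settled (in the stronger form of almost-everywhere convergence) by Theorem~3.5 of \cite{Hi16}, quoted here as \Thm{Hi16}. Your treatment of part (i) is essentially the known route: linearize the action on harmonic functions, observe that $c^{(w)}$ is an affine image of the normalized Gram matrix $\hat\Sigma_w$ of the random product $A_{w_1}\cdots A_{w_m}$, and invoke Furstenberg's theorem (noncompactness plus strong irreducibility give a simple top Lyapunov exponent, hence $\hat\Sigma_w$ degenerates to rank one, which corresponds exactly to $\partial\bD$, i.e.\ $r=\sqrt{8/75}$). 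This is the same mechanism the paper attributes to \cite{Hi16} for $\kp=\lm$, and for the Dirac-limit statement only simplicity of the Lyapunov spectrum is needed, so that half of your proposal is sound.

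For part (ii), however, there is a genuine gap, and it is precisely the gap that keeps the statement a conjecture. You correctly reduce (ii) to the absolute continuity of the Furstenberg stationary measure $\mu_*$ on the projective line for the uniform measure on $\{A_1,A_2,A_3\}$, but you then only name candidate strategies (transfer-operator with BV density, transversality/Ledrappier-type criteria) without carrying any of them out. This reduction is a reformulation, not a proof: absolute continuity of stationary measures for finitely supported measures on $2\times2$ matrix groups is notoriously delicate --- such measures are singular in many natural examples, and the known positive results (of Bourgain type) require hypotheses (many generators, Diophantine-type conditions) that are not verified, and may well fail, here. Controlling the density near the fixed directions of the $A_j$, which you yourself flag as the hardest step, is exactly where no argument is supplied. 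So the proposal proves (i) by the standard route but does not prove (ii); as the paper states, (ii) is open.
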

Although Conjecture (ii) remains unsolved, Conjecture (i) has been solved affirmatively in a stronger sense as follows.
\begin{theorem}[see {\cite[Theorem~3.5]{Hi16}}\label{th:Hi16}]{}
Let $\kp$ be either the normalized Hausdorff measure $\lm$ on $K$ or the Kusuoka measure $\nu$ on $K$.
For $x\in K\setminus V_*$ and $m\in \N$, let $[x]_m$ denote the unique element in $W_m$ such that $x\in K_{[x]_m}$.
Then,
\[
\lim_{m\to\infty}\sum_{j=1}^3\left(c_j^{([x]_m)}-\frac13\right)^2=\frac8{75},
\quad\kp\text{-a.e.\,}x.%
\footnote{Since $\kp(V_*)=0$, it is sufficient to define $[x]_m$ for only $x\in K\setminus V_*$.}
\]
\end{theorem}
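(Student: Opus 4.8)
The plan is to reformulate $c^{(w)}$ through the linear cocycle carried by harmonic functions, and then to show that along $\kp$-a.e.\ ray the relevant $2\times2$ matrix degenerates to rank one. Let $T=\cH/\R$ be the two-dimensional quotient by constants, with inner product $\la h,h'\ra=2\cE(h,h')$ making $h_1,h_2$ orthonormal. Each pullback $\psi_i^*$ preserves $\cH$ and induces $A_i\colon T\to T$; the harmonic-extension rule on $V_0$ shows that $A_i$ has eigenvalues $3/5$ and $1/5$, so $\det A_i=3/25$, and \Eq{ss} with $n=1$ gives $\sum_{i\in S}A_i^{\mathrm T}A_i=\frac35 I$. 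Writing $A_w$ for the product of the $A_{w_i}$ taken in the order for which $A_{wj}=A_jA_w$, and $M_w=A_wA_w^{\mathrm T}$, the identity $\nu_f(K_w)=2(5/3)^n\cE(\psi_w^*f,\psi_w^*f)$ together with orthonormality yields $\nu(K_w)=\frac12(5/3)^n\mathrm{tr}(M_w)$ and hence $c_j^{(w)}=\frac53\,\mathrm{tr}(A_j^{\mathrm T}A_jX_w)$ with $X_w=M_w/\mathrm{tr}(M_w)$ a trace-one positive semidefinite matrix. Thus $\sum_j(c_j^{(w)}-\frac13)^2=\sum_j(c_j^{(w)})^2-\frac13$ is a continuous function $\rho(X_w)$ of $X_w$, bounded by $8/75$ by \Thm{BHS14}. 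First I would record, using the $S_3$-symmetry of $K$ (the even permutations act on $H$ as rotations by $2\pi/3$), that the affine image of the rank-one matrices is an $S_3$-invariant ellipse centered at $^t(1/3,1/3,1/3)$ and therefore the round circle $\partial\bD$; consequently $\rho\equiv 8/75$ on the whole of $\partial\bD$, so it suffices to prove that $X_{[x]_m}$ approaches the set of rank-one matrices for $\kp$-a.e.\ $x$, the limiting direction being irrelevant.

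To measure degeneracy I would use $Y_m:=\det M_{[x]_m}/\mathrm{tr}(M_{[x]_m})^2$. Since this equals the product of the two eigenvalues of $X_{[x]_m}$, one has $Y_m\to0$ exactly when the smaller eigenvalue tends to $0$, i.e.\ when $X_{[x]_m}$ approaches rank one. Because $\det A_i=3/25$ does not depend on $i$, $\det M_w=(3/25)^{2m}$ is deterministic, while $\mathrm{tr}(M_w)=2(3/5)^m\nu(K_w)$; a one-line computation then gives
\[
Y_m=\frac{(3/25)^{2m}}{4(3/5)^{2m}\nu(K_{[x]_m})^2}=\frac{5^{-2m}}{4\,\nu(K_{[x]_m})^2}.
\]
Hence the entire statement reduces to the single estimate $5^{-m}/\nu(K_{[x]_m})\to0$, that is $\nu(K_{[x]_m})\gg 5^{-m}$, for $\kp$-a.e.\ $x$.

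For $\kp=\nu$ this is essentially free. Put $\cF_m=\sg([x]_m)$. From $\nu(K_{[x]_{m+1}})=c_{w_{m+1}}^{([x]_m)}\nu(K_{[x]_m})$ and $\sum_{j}c_j^{([x]_m)}=1$ one checks that $N_m:=3^{-m}/\nu(K_{[x]_m})$ satisfies $\bE_\nu[N_{m+1}\mid\cF_m]=N_m$, so $(N_m)_m$ is a nonnegative $\nu$-martingale (with $\bE_\nu[N_m]=1$). By the martingale convergence theorem $N_m$ converges a.s.\ to a finite limit, and since $5^{-m}/\nu(K_{[x]_m})=(3/5)^m N_m$ with $(3/5)^m\to0$, the required convergence holds for $\nu$-a.e.\ $x$.

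For $\kp=\lm$ the symbols $[x]_1,[x]_2,\dots$ are i.i.d.\ uniform on $S$ and the martingale identity degenerates into a submartingale, so a genuinely different input is needed. Here I would pass to Lyapunov exponents: with $\gm_1\ge\gm_2$ the exponents of the i.i.d.\ product $A_w$ (so that $\gm_1+\gm_2=\log(3/25)$), the Furstenberg--Kesten theorem gives $\frac1m\log\mathrm{tr}(M_w)\to 2\gm_1$ a.e.-$\lm$, whence $\frac1m\log\nu(K_{[x]_m})\to\log(5/3)+2\gm_1$, and the target $\nu(K_{[x]_m})\gg 5^{-m}$ becomes equivalent to the strict spectral gap $\gm_1>\gm_2$. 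Establishing this gap is the crux and the main obstacle: I would invoke Furstenberg's positivity theorem, verifying its two hypotheses directly from the explicit $A_i$ --- strong irreducibility on $T$ (the three $3/5$-eigendirections are distinct and cyclically permuted by $S_3$, excluding any common invariant finite union of lines) and non-compactness modulo scalars (the eigenvalue moduli $3/5\ne1/5$ prevent the generated semigroup from being relatively compact). The positive gap forces $Y_m\to0$ exponentially $\lm$-a.e.; combined with the continuity of $\rho$ and $\rho\equiv 8/75$ on $\partial\bD$ from the first step, this proves the theorem in both cases.
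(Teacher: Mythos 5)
The paper never actually proves \Thm{Hi16}: it imports it from \cite[Theorem~3.5]{Hi16} and records only the key inputs of that proof, namely Furstenberg's theory of products of random matrices for $\kp=\lm$ and Kusuoka's theorem that the martingale dimension is $1$ for $\kp=\nu$. So your proposal can only be compared against that outline, and measured against it your argument is correct but partly takes a new route. Your algebraic reduction is sound: with $A_{wj}=A_jA_w$ and $M_w=A_wA_w^{\mathrm T}$ one indeed has $\nu(K_w)=\frac12(5/3)^m\mathrm{tr}(M_w)$ and $c_j^{(w)}=\frac53\mathrm{tr}(A_j^{\mathrm T}A_jX_w)$, and since $\det A_i=3/25$ for every $i$, $\det X_{[x]_m}=5^{-2m}/\bigl(4\nu(K_{[x]_m})^2\bigr)$; granting the identification of the rank-one locus with $\partial\bD$, the theorem becomes literally equivalent to $5^{-m}/\nu(K_{[x]_m})\to 0$ $\kp$-a.e. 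For $\kp=\lm$ you then follow essentially the cited route (Furstenberg--Kesten plus Furstenberg's positivity criterion; the required decay is exactly the gap $\gm_1>\gm_2$, since $\gm_1+\gm_2=\log(3/25)=-\log 5-\log(5/3)$). For $\kp=\nu$ your route is genuinely different and much more elementary: instead of invoking martingale dimension one, you check that $N_m=3^{-m}/\nu(K_{[x]_m})$ is a nonnegative martingale for the filtration $\sg([x]_m)$ --- the computation $\bE_\nu[N_{m+1}\mid [x]_m=w]=\sum_j\frac{\nu(K_{wj})}{\nu(K_w)}\cdot\frac{3^{-m-1}}{\nu(K_{wj})}=N_m$ is valid because distinct cells overlap only in the $\nu$-null set $V_*$ --- so Doob's theorem gives a finite limit and $5^{-m}/\nu(K_{[x]_m})=(3/5)^mN_m\to0$. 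This buys a self-contained, elementary proof of exactly what is needed; what it does not recover is Kusuoka's stronger structural fact that $X_{[x]_m}$ itself converges $\nu$-a.e.\ to a rank-one projection (your determinant argument kills the small eigenvalue but says nothing about the limiting eigendirection), a fact valid on higher-dimensional gaskets and more general fractals \cite{Ku89,Hi08,Hi13}. The theorem needs only the degeneration, so this loss is harmless here.

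Two details should be tightened. First, ``therefore the round circle $\partial\bD$'': invariance under the order-three symmetry makes the image of the rank-one trace-one matrices a circle centered at ${}^t(1/3,1/3,1/3)$, but it does not determine the radius, and the theorem needs the radius to be exactly $\sqrt{8/75}$. Either evaluate the affine map at one rank-one matrix --- the projection onto the $3/5$-eigendirection of $A_1$ maps to ${}^t(3/5,1/5,1/5)$, whose squared distance to the center is $8/75$ --- or argue two-sidedly from \Thm{BHS14}: by convexity all $c^{(w)}$ lie in the image disk and they approach $\partial\bD$, so the radius is at least $\sqrt{8/75}$; conversely your degeneration argument shows $\sum_j\bigl(c_j^{([x]_m)}-\frac13\bigr)^2$ converges a.e.\ to the squared radius while each term is $<8/75$, so the radius is at most $\sqrt{8/75}$. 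Second, in the strong irreducibility check, the standard argument (pass to a power $A_i^N$ fixing each line of a finite invariant union; since $(3/5)^N\ne(1/5)^N$, each such line is an eigendirection of $A_i$) forces every line of the union to be a \emph{common} eigendirection of all three $A_i$, so you must exclude all coincidences among the six eigendirections, not only among the three $3/5$-directions: e.g.\ the $3/5$-direction of $A_1$ cannot equal the $1/5$-direction of $A_2$, which holds because the $3/5$-directions are at mutual angle $\pi/3$ while the two eigendirections of a single $A_i$ are orthogonal. Both repairs are routine, and with them your proof is complete.
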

The result for $\kp=\lm$ implies Conjecture (i) because almost everywhere convergence implies convergence in law.
For $\kp=\lm$, a key to the proof is the general theory of products of random matrices (Furstenberg's theorem). For $\kp=\nu$, a key to the proof is the fact that the martingale dimension is $1$, which was first proved by Kusuoka~\cite{Ku89} for Sierpinski gaskets of arbitrary dimension; see also \cite{Hi08,Hi13} for more general fractals.

In the next section, we discuss an application of \Thm{Hi16} for $\kp=\nu$ to quantitative estimates of the local spectral dimension of the Sierpinski gasket with respect to the Kusuoka measure $\nu$.
\section{Quantitative estimates of local spectral dimension}
The transition density $p_t(x,y)$ of Brownian motion on Sierpinski gasket~$K$---which is associated with the Dirichlet form $(\cE,\cF)$ on $L^2(K,\lm)$ in our context---was extensively studied by Barlow and Perkins~\cite{BP89}. In particular, the following sub-Gaussian estimate is known:
\begin{align*}
&c_1 t^{-\ds/2}\exp\Biggl(-c_2 \Biggl(\frac{|x-y|_{\R^2}^{\dw}}{t}\Biggr)^{-1/(\dw-1)}\Biggr)
    \le 
    p_t(x,y)\\
    &\quad\le c_3 t^{-\ds/2}\exp\Biggl(-c_4 \Biggl(\frac{|x-y|_{\R^2}^{\dw}}{t}\Biggr)^{-1/(\dw-1)}\Biggr), 
    \qquad x,y\in K,\ t\in(0,1],
\end{align*}
where $c_j$ $(j=1,2,3,4)$ are positive constants, $\ds=2\log_5 3=1.36521\cdots$ is the \emph{spectral dimension}, and $\dw=\log_2 5=2.32192\cdots>2$ is the \emph{walk dimension}.
On the other hand, the transition density of the singular time-changed Brownian motion with symmetrizing measure, say $\mu$---which is associated with the Dirichlet form $(\cE,\cF)$ on $L^2(K,\mu)$---was studied in several cases.
The case when $\mu$ is a self-similar measure was studied in \cite{BK01,HKK02}, and in particular, the multifractal properties of the (local) spectral dimension and walk dimension were observed. 
The case when $\mu$ is equal to the Kusuoka measure $\nu$ was treated in \cite{MS95,Ki08,Ka12}. 
We will focus on such a case here. The behavior of the transition density $q_t(x,y)$ is somewhat Gaussian-like. Concerning the short-time asymptotics of the on-diagonal $q_t(x,x)$, in particular, the following result is known.
\begin{theorem}[{\cite[Theorem~1.3~(2) and Proposition~6.6]{Ka12}}]{}
There exists a constant $\dsloc\in (1,2\log_{25/3}5]$ such that
\[
\lim_{t\downarrow0}\frac{2\log q_t(x,x)}{-\log t}=\dsloc,
\quad \nu\text{-a.e.\,}x.
\]
Moreover, $\dsloc$ is described as
\begin{equation}\label{eq:dsloc}
\dsloc=2-\frac{2\log(5/3)}{\log(5/3)-\rho},
\end{equation}
where $\rho=\lim_{m\to\infty}\rho_m=\inf_{m\in\N} \rho_m$ with
\begin{equation}\label{eq:rhom}
\rho_m=\frac1m\sum_{w\in W_m}\nu(K_w)\log \nu(K_w).
\end{equation}
\end{theorem}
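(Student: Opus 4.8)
The plan is to reduce the on-diagonal asymptotics of $q_t(x,x)$ to the cell-wise decay rate of $\nu$ and then to evaluate that rate by the theory of products of random matrices, with \Thm{Hi16} (for $\kp=\nu$) supplying the crucial $\nu$-a.e.\ convergence. Set $\rho(x)=\lim_{m\to\infty}\frac1m\log\nu(K_{[x]_m})$, whose existence is part of what must be shown. The $\nu$-symmetric process is the time change of Brownian motion on the resistance space $(K,\cE)$, for which a level-$m$ cell $K_w$ has effective resistance of order $(3/5)^m$ and mass $\nu(K_w)$; the Einstein relation then predicts the intrinsic time scale $\tau_w\asymp(3/5)^m\nu(K_w)$ and the on-diagonal size $q_{\tau_w}(x,x)\asymp\nu(K_w)^{-1}$. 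Inserting $t=\tau_{[x]_m}$ into $2\log q_t(x,x)/(-\log t)$ and letting $m\to\infty$ gives
\[
\lim_{t\downarrow0}\frac{2\log q_t(x,x)}{-\log t}=\frac{-2\rho(x)}{\log(5/3)-\rho(x)},
\]
which is exactly \Eq{dsloc} once $\rho(x)$ is shown to be a.e.\ constant. Thus the whole statement rests on (a) the a.e.\ existence and constancy of $\rho(x)$, (b) its identification with $\lim_m\rho_m=\inf_m\rho_m$, and (c) the a priori range of the right-hand side.

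For (a) and (b) I would exploit the linear structure of $\nu$ on cells. Coordinatizing $\cH$ modulo constants by $\R^2$ via a basis $(h_1,h_2)$ with $2\cE(h_i,h_j)=\dl_{ij}$, so that the coordinate vector $v_h$ of $h$ satisfies $|v_h|^2=2\cE(h,h)$, the pull-backs $\psi_w^*$ act as a matrix cocycle $A_w=A_{w_m}\cdots A_{w_1}$ ($w\in W_m$) normalized so that $|A_w v_h|^2=2(5/3)^{m}\cE(\psi_w^* h,\psi_w^* h)=\nu_h(K_w)$; averaging over the orthonormal basis yields the clean identity $\nu(K_w)=\frac12\|A_w\|_{\mathrm{HS}}^2$. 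Hence $\frac1m\log\nu(K_{[x]_m})=\frac2m\log\|A_{[x]_m}\|_{\mathrm{HS}}+o(1)$, whose a.e.\ limit is twice the top Lyapunov exponent $\lambda_1$ of the cocycle. This is where \Thm{Hi16} is indispensable: martingale dimension one forces the gradient direction $c^{([x]_m)}$ to converge $\nu$-a.e.\ to $\partial\bD$, i.e.\ to the dominant Oseledets direction, so that the second exponent never contributes and $\frac1m\log\|A_{[x]_m}\|_{\mathrm{HS}}\to\lambda_1$ holds $\nu$-a.e.\ with a nonrandom limit. Kingman's subadditive ergodic theorem then writes $\lambda_1$ as an infimum of the normalized expectations of the subadditive cocycle $\log\|A_{[x]_m}\|_{\mathrm{HS}}$, which transported through $\nu(K_w)=\frac12\|A_w\|_{\mathrm{HS}}^2$ gives exactly $\rho=\lim_m\rho_m=\inf_m\rho_m$ with $\rho_m$ as in \Eq{rhom}.

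For (c) I would read off two-sided bounds on $\rho$ from the generators. The restriction of a harmonic function to a $1$-cell carries the classical eigenvalues $3/5$ and $1/5$, so $|\det A_i|=1/5$, and together with $\|A_i\|_{\mathrm{HS}}^2=2/3$ (from $\nu(K_i)=1/3$) this forces the squared singular values of $A_i$ to be $3/5$ and $1/15$, in particular $\|A_i\|_{\mathrm{op}}^2=3/5$. Submultiplicativity then gives $\nu(K_w)\le\|A_w\|_{\mathrm{op}}^2\le(3/5)^m$, whence $\rho\le-\log(5/3)$ and $\dsloc\ge1$, while the Hadamard-type bound $\|A_w\|_{\mathrm{HS}}^2\ge2|\det A_w|=2\cdot5^{-m}$ gives $\nu(K_w)\ge5^{-m}$, whence $\rho\ge-\log5$ and $\dsloc\le2\log_{25/3}5$. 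Since $\nu$ is \emph{singular}, the operator-norm bound is not saturated uniformly across scales, so $\rho<-\log(5/3)$ strictly and $\dsloc>1$; substituting into the displayed formula yields $\dsloc\in(1,2\log_{25/3}5]$.

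The hard part is the analytic half of step (a): turning the Einstein-relation heuristics $\tau_w\asymp(3/5)^m\nu(K_w)$ and $q_{\tau_w}(x,x)\asymp\nu(K_w)^{-1}$ into rigorous two-sided estimates for a measure $\nu$ that is singular with respect to the Hausdorff measure, so that the Barlow--Perkins sub-Gaussian bounds cannot be quoted directly. One needs mean-exit-time and near-diagonal heat-kernel estimates for the $\nu$-time-changed process that are uniform enough to pass to the limit along $t=\tau_{[x]_m}$ and to control the interpolation between consecutive scales; showing that this inter-scale oscillation does not perturb the limit, in tandem with the $\nu$-a.e.\ directional convergence furnished by \Thm{Hi16}, is the real content. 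Once those inputs are secured, the cocycle computation above delivers \Eq{dsloc} together with its range.
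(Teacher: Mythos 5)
This statement is not proved in the paper at all: it is imported verbatim from Kajino \cite{Ka12} (Theorem~1.3~(2) and Proposition~6.6), so there is no in-paper argument to compare against and your attempt has to stand on its own --- and it does not. The decisive gap is the one you flag yourself at the end: the entire analytic content of the theorem is the rigorous, scale-uniform, two-sided on-diagonal control of $q_t$ for the time-changed process (Gaussian-type bounds with respect to the harmonic/intrinsic metric, due to Kigami \cite{Ki08} and refined in \cite{Ka12}). Your Einstein-relation step $\tau_w\asymp(3/5)^m\nu(K_w)$, $q_{\tau_w}(x,x)\asymp\nu(K_w)^{-1}$, together with the inter-scale interpolation, is precisely what those estimates deliver; leaving it as ``the hard part'' means you have reduced the theorem to itself. (The interpolation itself would be harmless: by \Thm{BHS14} each ratio $c_j^{(w)}$ lies in $[1/15,3/5]$, so consecutive scales $\tau_{[x]_m}$ are uniformly comparable.)

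The ergodic-theoretic half also has concrete errors. First, Kingman's theorem needs the digit process under $\nu$ to be stationary, and ergodicity is needed for the a.e.\ limit to be constant; you establish neither. Stationarity is true but requires proof (it amounts to the identity $\sum_{i\in S}A_i\,{}^tA_i=I$ for your normalized matrices), and ergodicity of the Kusuoka measure under the shift is a nontrivial theorem; \Thm{Hi16} cannot stand in for it, since it controls the direction of $c^{([x]_m)}$ (approach to $\partial\bD$), not the a.e.\ constancy of the growth rate $\frac1m\log\|A_{[x]_m}\|$ --- ``martingale dimension one, hence the second Oseledets exponent never contributes'' is not an argument. Second, your transport of Kingman's ``$\lim=\inf$'' to \Eq{rhom} fails: from $\nu(K_w)=\frac12\|A_w\|_{\mathrm{HS}}^2$ one gets $\rho_m=\frac2m\int\log\|A_{[x]_m}\|_{\mathrm{HS}}\,d\nu-\frac{\log 2}{m}$, and the correction term vanishes in the limit but destroys the identification of infima. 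Indeed no argument can give what you claim for the functional \Eq{rhom}: $\rho_1=\sum_{j\in S}\frac13\log\frac13=-\log 3\approx-1.0986$, while \Thm{main} with $n=0$ (i.e., \Thm{main1} and the first row of Table~\ref{table:1}) gives $\rho\ge\frac35\log3-\log5\approx-0.9503$, so $\inf_m\rho_m\le\rho_1<\rho$; in fact, once stationarity is known, subadditivity of entropy shows that $\rho_m$ in \Eq{rhom} increases to $\rho$, and an ``$\inf$'' characterization can only refer to a genuinely subadditive functional such as $\frac2m\int\log\|A_{[x]_m}\|_{\mathrm{op}}\,d\nu$, which shares the limit but not the finite-$m$ values. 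Third, the strict bound $\dsloc>1$, i.e.\ $\rho<-\log(5/3)$, does not follow from ``$\nu$ is singular'': singularity with respect to the Hausdorff measure concerns the comparison of $\rho$ with $-\log 3$, not with $-\log(5/3)$. One needs a strict-submultiplicativity (Furstenberg-type) argument --- e.g.\ $\|A_jA_i\|_{\mathrm{op}}<\|A_j\|_{\mathrm{op}}\|A_i\|_{\mathrm{op}}$ for $i\neq j$ because the top eigendirections differ, combined with a positive a.e.\ frequency of mismatched transitions --- or this paper's own route $\rho\le\max_{x\in\partial\bD}g(x)<-\log(5/3)$. The only part of your sketch that is both correct and complete is the singular-value computation (squared singular values $3/5$ and $1/15$, determinant $1/5$) and the resulting bound $\rho\ge-\log 5$, hence $\dsloc\le2\log_{25/3}5$.
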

We call $\dsloc$ the \emph{local spectral dimension} of $K$ with respect to the Kusuoka measure $\nu$.
From numerical computation of $\rho_m$ with $m=16$, a quantitative estimate of $\dsloc$ is given in~\cite[Remark~6.7~(1)]{Ka12} as
\[
\left(2-\frac{2\log(5/3)}{\log(5/3)-\rho_{16}}=\right)1.27874\cdots\le \dsloc\le1.51814\cdots\bigl(=2\log_{25/3}5\bigr).
\]
It seems difficult to obtain a substantially sharper estimate of $\dsloc$ by using only the above equations \Eq{dsloc} and \Eq{rhom}.
The main object of this paper is to discuss quantitative estimates of $\dsloc$ by another approach using \Thm{Hi16} with $\kp=\nu$. 
\Thm{main}, which is stated later, provides an estimate of $\dsloc$; by using this, we will give a rigorous proof of the estimate
\begin{equation}\label{eq:130}
\begin{split}
(1.271650\cdots=)\,&\frac{15\log 3+15\log 5-14\log 7}{15\log 5-7\log 7}
\le\dsloc\\
&\le \frac{5\log 5-3\log 3}{5\log 5-4\log 3}\,(=1.300763\cdots)
\end{split}
\end{equation}
(see \Thm{main1}). 
We will also explain that numerical calculation by \emph{Mathematica}~\cite{W} suggests the estimate
\begin{equation}\label{eq:main2}
{1.291008\cdots}\le \dsloc\le {1.291026\cdots}.
\end{equation}

The first ingredient for the arguments is the following.

\begin{theorem}[see {\cite[Theorem~6.2]{BHS14}}\label{th:Psi}]{}
The correspondence $c^{(w)}\mapsto {}^t(c^{(w1)},c^{(w2)},c^{(w3)})$ for $w\in W_*$ is given by $c^{(w)}\mapsto \Psi(c^{(w)})$, where $\Psi={}^t(\Psi_1,\Psi_2,\Psi_3)\colon \bD\to\bD\times\bD\times\bD$ is defined as
\begin{align*}
&\Psi_1\begin{pmatrix}x_1\\x_2\\x_3\end{pmatrix}=\frac1{15x_1}\begin{pmatrix}10x_1\\4x_1+3x_2\\4x_1+3x_3\end{pmatrix}-\frac1{25x_1}\begin{pmatrix}1\\2\\2\end{pmatrix},\\
&\Psi_2=R^{-1}\circ\Psi_1\circ R,\quad
\Psi_3=R\circ\Psi_1\circ R^{-1},\\
&R\begin{pmatrix}x_1\\x_2\\x_3\end{pmatrix}=\begin{pmatrix}x_2\\x_3\\x_1\end{pmatrix}.
\end{align*}
\end{theorem}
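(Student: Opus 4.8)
\emph{Proof proposal.}
My plan is to reduce the statement to a finite computation in linear algebra, exploiting that $\nu$ is built from harmonic functions and that harmonic functions restrict to harmonic functions on subcells through explicit matrices. First I would recall the standard \emph{harmonic extension matrices}: for a harmonic $h$ with boundary values $u=h|_{V_0}\in\R^3$, the restriction $\psi_j^* h$ is again harmonic and $(\psi_j^* h)|_{V_0}=A_j u$ for fixed $3\times 3$ matrices $A_j$ (the ``$\tfrac15$-rule''), each fixing the constant vector ${}^t(1,1,1)$. Writing $M$ for the symmetric matrix of the level-$0$ energy $Q_0$ (so $\cE(h,h)=u^\top M u$ for harmonic $h$, with $M\,{}^t(1,1,1)=0$) and setting $M_j=A_j^\top M A_j$, the self-similarity \Eq{ss} with $n=1$ gives $\sum_j M_j=\tfrac35 M$. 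Since $A_j$ and $M$ kill constants, all of these descend to the quotient $\mathcal Q=\R^3/\R\,{}^t(1,1,1)\cong\R^2$, on which $\bar M$ is positive definite; I write $\bar A_j,\bar M,\bar M_j$ for the descended objects.

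Next I would encode a word $w$ by the $2\times 2$ Gram matrix $G_w=\bigl(\cE(\psi_w^*h_i,\psi_w^*h_k)\bigr)_{i,k}$. Assembling the boundary data of $\psi_w^* h_1,\psi_w^* h_2$ into a linear map $\bar U_w\colon\R^2\to\mathcal Q$, harmonicity gives $G_w=\bar U_w^\top\bar M\,\bar U_w$, and the identity $\nu_f(K_w)=2(5/3)^{|w|}\cE(\psi_w^*f,\psi_w^*f)$ together with $\nu=\tfrac12(\nu_{h_1}+\nu_{h_2})$ yields $\nu(K_w)=(5/3)^{|w|}\operatorname{tr}G_w=(5/3)^{|w|}\operatorname{tr}(\bar M W_w)$, where $W_w:=\bar U_w\bar U_w^\top$ is symmetric positive definite (as $\bar U_w$ is invertible). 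Because $\psi_{wj}^* h=\psi_j^*(\psi_w^* h)$, the crucial recursion is $\bar U_{wj}=\bar A_j\bar U_w$, whence
\[
W_{wj}=\bar A_j\,W_w\,\bar A_j^\top,\qquad c_j^{(w)}=\frac53\,\frac{\operatorname{tr}(\bar M_j W_w)}{\operatorname{tr}(\bar M W_w)} .
\]
Thus $c^{(w)}$ depends only on the projective class $[W_w]$, and the passage $[W_w]\mapsto[W_{wj}]$ is the congruence $[W]\mapsto[\bar A_j W\bar A_j^\top]$.

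It remains to transport this to $\bD$. Define $F([W])=\bigl(\tfrac53\operatorname{tr}(\bar M_j W)/\operatorname{tr}(\bar M W)\bigr)_{j}$. I would show $F$ is a bijection from the two-dimensional space of projective classes of positive-definite $2\times 2$ matrices onto $\bD$ (rank-one classes corresponding to $\partial\bD$, which incidentally re-explains the second assertion of \Thm{BHS14}) and compute $F^{-1}$ explicitly; then $\Psi_j=F\circ\bigl(\,[W]\mapsto[\bar A_jW\bar A_j^\top]\,\bigr)\circ F^{-1}$, and evaluating this composition should reproduce the stated formula for $\Psi_1$. The other two cost nothing: the relabeling ${}^t(x_1,x_2,x_3)\mapsto R\,{}^t(x_1,x_2,x_3)$ corresponds to conjugating the triangle by the rotation $p_j\mapsto p_{j+1}$, which permutes the $\bar A_j$ $\bar M$-equivariantly, so $\Psi_2=R^{-1}\Psi_1 R$ and $\Psi_3=R\Psi_1 R^{-1}$ follow at once. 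For well-definedness as a map $\bD\to\bD\times\bD\times\bD$, the singular factor $1/x_1$ in $\Psi_1$ is harmless since $x_1\ge 1/15>0$ on $\oD$, and $\Psi_j(\bD)\subseteq\bD$ then follows from \Thm{BHS14} by continuity.

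The main obstacle is the explicit inversion of $F$ and the subsequent elimination: one must parametrize $[W]$ (for instance by an eigenvalue ratio and eigendirection, matching $r$ and $\theta$), push it through the single congruence $\bar A_1 W\bar A_1^\top$, and simplify the three trace ratios until they collapse to exactly $\frac1{15x_1}{}^t(10x_1,\,4x_1+3x_2,\,4x_1+3x_3)-\frac1{25x_1}{}^t(1,2,2)$. This is a finite but delicate rational computation; everything upstream of it is formal, and the $S_3$-symmetry removes two thirds of the labor.
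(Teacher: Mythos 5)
First, a point of comparison: the paper does not prove \Thm{Psi} at all --- it is quoted from \cite[Theorem~6.2]{BHS14} (translated from the variables $b^{(w)}$ to $c^{(w)}$, as the paper's footnote explains) --- so your argument can only be measured against the cited source and against its own internal completeness. Your framework is correct and is essentially the standard one: the harmonic extension matrices $A_j$, the energy matrix $M$ with $\sum_{j}A_j^\top M A_j=\frac35 M$, the recursion $\bar U_{wj}=\bar A_j\bar U_w$ on the quotient by constants, and the resulting formulas $W_{wj}=\bar A_j W_w\bar A_j^\top$ and $c_j^{(w)}=\frac53\,\mathrm{tr}(\bar M_j W_w)/\mathrm{tr}(\bar M W_w)$ are all right (note that $\bar A_j$ is invertible, with eigenvalues $3/5$ and $1/5$, so $W_w$ is indeed positive definite). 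Your projectivization also correctly isolates the implicit well-definedness claim hidden in the statement --- that $c^{(wj)}$ depends on $w$ only through the value $c^{(w)}$ --- which a naive reading might not even notice needs proof.

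The gaps sit exactly where the theorem lives. (1) The explicit formula for $\Psi_1$ is the entire content of the statement, and your plan ends with ``evaluating this composition should reproduce the stated formula''; likewise the bijectivity of $F$ from positive-definite classes onto $\bD$ is announced (``I would show''), not proven, and your construction $\Psi_j=F\circ\bigl([W]\mapsto[\bar A_jW\bar A_j^\top]\bigr)\circ F^{-1}$ needs it both to make sense and to give $\Psi_j(\bD)\subseteq\bD$. These are indeed finite computations, but they \emph{are} the proof; everything you have written is the formal scaffolding around them. A cleaner way to finish within your framework: since each component of $\Psi_j(c^{(w)})$ and of $c^{(wj)}$ is a ratio of linear functionals of the three entries of $W_w$ (use $x_1+x_2+x_3=1$ to homogenize the term $\frac1{25x_1}{}^t(1,2,2)$), the identity $c^{(wj)}=\Psi_j(c^{(w)})$ reduces to polynomial identities in three variables, checkable once and for all without ever inverting $F$. (2) The step ``$\Psi_j(\bD)\subseteq\bD$ then follows from \Thm{BHS14} by continuity'' does not work: \Thm{BHS14} controls only the countable set $\{c^{(w)}\}_{w\in W_*}$, which is not known to be dense in $\bD$ (and, in light of \Thm{Hi16}, its accumulation behavior is expected to favor $\partial\bD$), so continuity says nothing about the rest of $\bD$. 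The correct route is the one your own machinery already provides: congruence by the invertible $\bar A_j$ preserves positive-definite classes, so $\Psi_j(\bD)\subseteq\bD$ follows once $F$ is shown to be a bijection onto $\bD$.
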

Each $\Psi_j$ extends continuously to the map from $\oD$ to itself.
We remark that the restriction map $\Psi_j|_{\partial\bD}$ provides a homeomorphism from $\partial\bD$ to itself for each $j\in S$.

We define a Markov chain $\{X_m\}_{m=0}^\infty$ on $\oD$ as follows.
We set $X_0=\begin{pmatrix}1/3\\1/3\\1/3\end{pmatrix}$, and for $m\ge0$,
\[
\bP\left(X_{m+1}=\Psi_j(X_m)\;\middle|\;X_m=\begin{pmatrix}x_1\\x_2\\x_3\end{pmatrix}\right)=x_j,
\quad j\in S.
\]
\begin{proposition}\label{prop:law}
For all $m\ge0$, the law $P^{X_m}$ of $X_m$ is equal to $\sum_{w\in W_m}\nu(K_w)\dl_{c^{(w)}}$, where $\dl_z$ denotes the Dirac measure at $z$. In other words, $P^{X_m}$ coincides with the image measure of $\nu$ by the map $x\mapsto c^{([x]_m)}$, where $[x]_m$ is provided in \Thm{Hi16}.
\end{proposition}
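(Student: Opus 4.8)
The plan is to prove the identity $P^{X_m}=\sum_{w\in W_m}\nu(K_w)\dl_{c^{(w)}}$ by induction on $m$, exploiting the fact that the single-step transition law of the chain is designed to mirror exactly the subdivision rule for $\nu$ encoded in \Thm{Psi}.

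For the base case $m=0$, I first note that $\nu$ is a probability measure, since $\nu(K)=2\cE(h_i,h_i)=1$, and that by the invariance of $\nu$ under the permutation symmetries of $K$ interchanging $\psi_1,\psi_2,\psi_3$ we have $\nu(K_1)=\nu(K_2)=\nu(K_3)=1/3$. Hence $c^{(\emptyset)}={}^t(1/3,1/3,1/3)=X_0$, and both sides of the claimed identity reduce to $\dl_{X_0}$.

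For the inductive step, assume $P^{X_m}=\sum_{w\in W_m}\nu(K_w)\dl_{c^{(w)}}$. Using the Markov property together with the prescribed transition probabilities, for any bounded measurable $g$ on $\oD$ I would compute
\[
\bE[g(X_{m+1})]=\bE\Bigl[\sum_{j\in S}(X_m)_j\,g(\Psi_j(X_m))\Bigr]
=\sum_{w\in W_m}\nu(K_w)\sum_{j\in S}c_j^{(w)}\,g(\Psi_j(c^{(w)})).
\]
The two facts that collapse these summands are: (a) $\Psi_j(c^{(w)})=c^{(wj)}$, which is precisely the $j$-th component of the correspondence in \Thm{Psi}; and (b) the telescoping identity $\nu(K_w)\,c_j^{(w)}=\nu(K_{wj})$, coming directly from the definition $c_j^{(w)}=\nu(K_{wj})/\nu(K_w)$. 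Substituting these and re-indexing $w'=wj$ over $W_{m+1}$ yields $\bE[g(X_{m+1})]=\sum_{w'\in W_{m+1}}\nu(K_{w'})\,g(c^{(w')})$, which is the desired identity at level $m+1$.

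For the reformulation as a pushforward, I would recall that $\nu(V_*)=0$, so that $[x]_m$ is defined for $\nu$-a.e.\ $x$, and that $\{x\in K\setminus V_*\mid [x]_m=w\}$ differs from $K_w$ only by a $\nu$-null set; hence the image of $\nu$ under $x\mapsto c^{([x]_m)}$ assigns total mass $\nu(K_w)$ to the point $c^{(w)}$ for each $w\in W_m$, with masses added when distinct words give the same point, matching $\sum_{w\in W_m}\nu(K_w)\dl_{c^{(w)}}$. I expect no serious obstacle here: the argument is a clean induction whose entire content is the compatibility between the chain's dynamics and \Thm{Psi}. The only points requiring genuine care are the symmetry input $\nu(K_j)=1/3$ in the base case and the measure-zero bookkeeping in the pushforward reformulation.
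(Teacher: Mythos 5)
Your proposal is correct and follows essentially the same route as the paper: an induction on $m$ whose base case uses the symmetry of the Kusuoka measure to get $c^{(\emptyset)}={}^t(1/3,1/3,1/3)$, and whose inductive step combines $\Psi_j(c^{(w)})=c^{(wj)}$ from \Thm{Psi} with $\nu(K_w)c_j^{(w)}=\nu(K_{wj})$ to re-index over $W_{m+1}$. The only differences are cosmetic (you phrase the step via test functions rather than directly with measures, and you spell out the pushforward reformulation and the null-set bookkeeping, which the paper leaves implicit).
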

\begin{proof}
The claim is true for $m=0$ by noting that $c^{(\emptyset)}={}^t(1/3,1/3,1/3)$ from the symmetry of the Kusuoka measure $\nu$.
Let us assume that the claim is true for $m=n$. Then, $P^{X_{n+1}}$ is equal to
\[
\sum_{w\in W_n}\nu(K_w)\Biggl(\sum_{j\in S} c_j^{(w)}\dl_{\Psi_j(c^{(w)})}\Biggr)
=\sum_{w\in W_n,\ j\in S}\nu(K_{wj})\dl_{c^{(wj)}}.
\]
Therefore, the claim is true for $m=n+1$.
\end{proof}
The Markov chain $\{X_m\}_{m=0}^\infty$ is Feller, that is, its transition operator $\cP$ defined as
\[
\cP f(x)=\sum_{j=1}^3 f(\Psi_j(x))x_j,\quad x=\begin{pmatrix}x_1\\x_2\\x_3\end{pmatrix}\in\oD,\quad f\in C(\oD)
\]
satisfies that $\cP(C(\oD))\subset C(\oD)$.

We define a function $g$ on $\oD$ by 
\begin{equation}\label{eq:g}
g\begin{pmatrix}x_1\\x_2\\x_3\end{pmatrix}=\sum_{j\in S} x_j\log x_j,
\end{equation}
where $0\log 0:=0$.
For $m\in\N$, let 
\[
\xi_m=\frac1m\sum_{k=0}^{m-1}P^{X_k}.
\]
The following proposition describes the connection between $\{X_m\}_{m=0}^\infty$ and $\rho_m$, which was introduced in \Eq{rhom}.
\begin{proposition}\label{prop:rhom}
For each $m\in\N$, 
\begin{equation}\label{eq:xim}
\rho_m=\int_{\oD}g(x)\,\xi_m(dx).
\end{equation}
\end{proposition}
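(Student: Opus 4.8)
The plan is to turn the averaged identity \Eq{xim} into a telescoping sum. First I would introduce the partial quantities $A_k=\sum_{w\in W_k}\nu(K_w)\log\nu(K_w)$ for $k\in\Z_{\ge0}$, so that $\rho_m=A_m/m$ by \Eq{rhom}, and note $A_0=\nu(K)\log\nu(K)=0$ since $\nu$ is a probability measure. On the other side, \Prop{law} gives $\int_{\oD}g\,dP^{X_k}=\sum_{v\in W_k}\nu(K_v)\,g(c^{(v)})$, which is a finite sum; there is no integrability subtlety, and in fact each $c^{(v)}$ lies in $\bD$ by \Thm{BHS14}, while $\oD$ sits strictly inside the open simplex $\{x_j>0\}$ (the radius $\sqrt{8/75}$ is smaller than the inradius $\sqrt6/6$ of the simplex in $H$), so $g$ is continuous and finite on the relevant points. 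Since $\int_{\oD}g\,d\xi_m=\frac1m\sum_{k=0}^{m-1}\int_{\oD}g\,dP^{X_k}$, it suffices to prove the per-step identity $\int_{\oD}g\,dP^{X_k}=A_{k+1}-A_k$ for each $k\in\Z_{\ge0}$; summing over $k=0,\dots,m-1$ then telescopes to $A_m$, and dividing by $m$ yields \Eq{xim}.

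To establish the per-step identity I would expand $g(c^{(v)})=\sum_{j\in S}c_j^{(v)}\log c_j^{(v)}$ from \Eq{g} and substitute $c_j^{(v)}=\nu(K_{vj})/\nu(K_v)$. Multiplying by $\nu(K_v)$ rewrites the generic summand as $\nu(K_{vj})\bigl(\log\nu(K_{vj})-\log\nu(K_v)\bigr)$. Summing over $j\in S$ and $v\in W_k$ and splitting the logarithm, the first part collects exactly to $\sum_{u\in W_{k+1}}\nu(K_u)\log\nu(K_u)=A_{k+1}$ (using that every $u\in W_{k+1}$ is uniquely $vj$), while the second part equals $-\sum_{v\in W_k}\bigl(\sum_{j\in S}\nu(K_{vj})\bigr)\log\nu(K_v)$. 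Invoking the consistency $\sum_{j\in S}\nu(K_{vj})=\nu(K_v)$ collapses this to $-A_k$, completing the per-step identity.

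The computation is elementary, so I do not expect a genuine analytic obstacle; the one input that must be used rather than derived on the spot is the consistency $\sum_{j\in S}\nu(K_{vj})=\nu(K_v)$, equivalently $\sum_{j\in S}c_j^{(v)}=1$. This is precisely the statement that $c^{(v)}$ lies in the hyperplane $H$, recorded in the excerpt, and ultimately reflects that $\nu$ has no atoms so that the pairwise overlaps of the three subcopies $K_{vj}$ of $K_v$ (finite subsets of $V_*$) are $\nu$-null. Once this is in place, the telescoping eliminates all intermediate terms, and the boundary values $A_0=0$ and $\rho_m=A_m/m$ close the argument.
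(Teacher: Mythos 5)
Your proof is correct and follows essentially the same route as the paper: invoke \Prop{law} to write $\int_{\oD}g\,dP^{X_k}$ as $\sum_{w\in W_k}\nu(K_w)g(c^{(w)})$, expand $g$ via $c_j^{(w)}=\nu(K_{wj})/\nu(K_w)$, and telescope the resulting differences over $k=0,\dots,m-1$. Your explicit isolation of the consistency $\sum_{j\in S}\nu(K_{wj})=\nu(K_w)$ (used implicitly in the paper's telescoping step) is a minor presentational difference, not a different argument.
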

\begin{proof}
From \Prop{law}, for $k\ge0$,
\begin{align*}
\bE[g(X_k)]
&=\sum_{w\in W_k}\nu(K_w)g(c^{(w)})\\
&=\sum_{w\in W_k}\nu(K_w)\sum_{j\in S}\frac{\nu(K_{wj})}{\nu(K_w)}\log\frac{\nu(K_{wj})}{\nu(K_w)}\\
&=\sum_{w\in W_k}\sum_{j\in S}\nu(K_{wj})\log\frac{\nu(K_{wj})}{\nu(K_w)}\\
&=\sum_{w'\in W_{k+1}}\nu(K_{w'})\log\nu(K_{w'})-\sum_{w\in W_k}\nu(K_{w})\log\nu(K_{w}).
\end{align*}
Therefore,
\begin{align*}
\int_{\oD}g(x)\,\xi_m(dx)
&=\frac1m\sum_{k=0}^{m-1}\bE[g(X_k)]\\
&=\frac1m\left(\sum_{w\in W_m}\nu(K_{w})\log\nu(K_{w})-\nu(K_{\emptyset})\log\nu(K_{\emptyset})\right)\\
&=\rho_m,
\end{align*}
since $\nu(K_{\emptyset})=\nu(K)=1$.
\end{proof}
Since $\oD$ is compact, there exists a subsequence $\{\xi_{m_l}\}$ of $\{\xi_m\}$ converging weakly to a probability measure $\xi$. 
By letting $m\to\infty$ along $\{m_l\}$ in \Eq{xim},
\[
\rho=\lim_{l\to\infty}\rho_{m_l}=\int_{\oD}g(x)\,\xi(dx).
\]
It is a standard fact that $\xi$ is an invariant measure. Indeed, for any $f\in C(\oD)$, by letting $l\to\infty$ in the equation
\begin{align*}
&\left|\int_{\oD}\cP f(x)\,\xi_{m_l}(dx)-\int_{\oD} f(x)\,\xi_{m_l}(dx)\right|\\
&=\left|\frac1{m_l}\sum_{k=0}^{m_l-1}\bE[f(X_{k+1})]-\frac1{m_l}\sum_{k=0}^{m_l-1}\bE[f(X_{k})]\right|\\
&=\left|\frac1{m_l}(\bE[f(X_{m_l})]-\bE[f(X_{0})])\right|\\
&\le \frac2{m_l}\sup_{x\in\oD}|f(x)|,
\end{align*}
we have 
\[
\int_{\oD}\cP f(x)\,\xi(dx)-\int_{\oD} f(x)\,\xi(dx)=0.
\]
Therefore, for all $n\in\Z_{\ge0}$,
\begin{equation}\label{eq:rho}
\rho=\int_{\oD}\cP^n g(x)\,\xi(dx).
\end{equation}
Since $P^{X_m}\circ r^{-1}$ converges to the Dirac measure at $\sqrt{8/75}$ as $m\to\infty$ from \Prop{law} and \Thm{Hi16} with $\kp=\nu$, $\xi\circ r^{-1}$ is the Dirac measure at $\sqrt{8/75}$.
That is, $\xi$ concentrates on $\partial\bD$. We can then rewrite \Eq{rho} as
\begin{equation}\label{eq:rhoPn}
\rho=\int_{\partial\bD}\cP^n g(x)\,\xi(dx).
\end{equation}
Thus, we obtain the following estimate.
\begin{theorem}\label{th:main}
For all $n\in\Z_{\ge0}$, it holds that
\begin{equation}\label{eq:estimate}
\min_{x\in\partial\bD}\cP^n g(x)\le\rho\le\max_{x\in\partial\bD}\cP^n g(x)
\end{equation}
and
\begin{equation}\label{eq:estimate2}
2-\frac{2\log(5/3)}{\log(5/3)-\max_{x\in\partial\bD}\cP^n g(x)}\le\dsloc\le 2-\frac{2\log(5/3)}{\log(5/3)-\min_{x\in\partial\bD}\cP^n g(x)}.
\end{equation}
\end{theorem}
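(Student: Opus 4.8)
The plan is to read off \Eq{estimate} directly from the integral representation \Eq{rhoPn} and then deduce \Eq{estimate2} by feeding \Eq{estimate} through the formula \Eq{dsloc}. All of the analytic work has in fact already been carried out in establishing that $\xi$ is an invariant probability measure concentrated on $\partial\bD$ satisfying \Eq{rhoPn}, so what remains is largely bookkeeping.

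First I would check that $\cP^n g$ is continuous on $\oD$, so that its extrema over the compact set $\partial\bD$ are attained. The function $g$ from \Eq{g} is continuous on $\oD$, since with the convention $0\log 0:=0$ the map $t\mapsto t\log t$ extends continuously to $[0,\infty)$. Because the chain is Feller, i.e.\ $\cP(C(\oD))\subset C(\oD)$, induction gives $\cP^n g\in C(\oD)$ for every $n\in\Z_{\ge0}$, and in particular $\min_{x\in\partial\bD}\cP^n g$ and $\max_{x\in\partial\bD}\cP^n g$ exist. Now \Eq{rhoPn} expresses $\rho$ as the integral of the continuous function $\cP^n g$ against the probability measure $\xi$ supported on $\partial\bD$. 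Integration against a probability measure yields a value between the infimum and supremum of the integrand on the support, so
\[
\min_{x\in\partial\bD}\cP^n g(x)\le\int_{\partial\bD}\cP^n g(x)\,\xi(dx)=\rho\le\max_{x\in\partial\bD}\cP^n g(x),
\]
which is precisely \Eq{estimate}.

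For \Eq{estimate2} I would regard the right-hand side of \Eq{dsloc} as a function of $\rho$, namely $F(\rho)=2-\frac{2\log(5/3)}{\log(5/3)-\rho}$. Its derivative $F'(\rho)=-\frac{2\log(5/3)}{(\log(5/3)-\rho)^2}$ is strictly negative on the domain $\rho<\log(5/3)$; this domain is the relevant one, since $\log(5/3)>0$ while $g\le0$ forces $\rho<0$. Hence $F$ is strictly decreasing, and applying $F$ to the chain \Eq{estimate} reverses the inequalities, giving exactly \Eq{estimate2}. I do not anticipate a genuine obstacle here: the only points demanding care are the continuity of $\cP^n g$ (needed so the extrema over $\partial\bD$ are attained and the min/max statement is meaningful) and the sign of $F'$ that dictates the direction swap, both of which are immediate from the structure already set up.
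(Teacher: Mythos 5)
Your proposal is correct and follows exactly the paper's own (very terse) proof: \Eq{estimate} is read off from the representation \Eq{rhoPn} of $\rho$ as an integral of $\cP^n g$ against the probability measure $\xi$ concentrated on $\partial\bD$, and \Eq{estimate2} follows by applying the decreasing map $\rho\mapsto 2-\frac{2\log(5/3)}{\log(5/3)-\rho}$ from \Eq{dsloc}. The details you supply—continuity of $\cP^n g$ via the Feller property so the extrema are attained, and the sign of the derivative so the inequalities reverse—are precisely the bookkeeping the paper leaves implicit.
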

\begin{proof}
Eq.~\Eq{estimate} follows from \Eq{rhoPn}. 
Eq.~\Eq{estimate2} follows from \Eq{estimate} and \Eq{dsloc}.
\end{proof}
\begin{remark}
Since $\cP$ is positivity-preserving on $C(\partial\bD)$ and $\cP1=1$, inequality \Eq{estimate} provides a finer estimate as $n$ increases. It is expected that $\min_{x\in\partial\bD}\cP^n g(x)$ and $\max_{x\in\partial\bD}\cP^n g(x)$ have the same limit as $n\to\infty$, but this remains to be proved.
\end{remark}
\begin{figure}[t]
\centering
\begin{tabular}{c@{\hspace{0.07\textwidth}}c}
\begin{overpic}[bb=0 0 360 229, width=0.45\textwidth,clip]{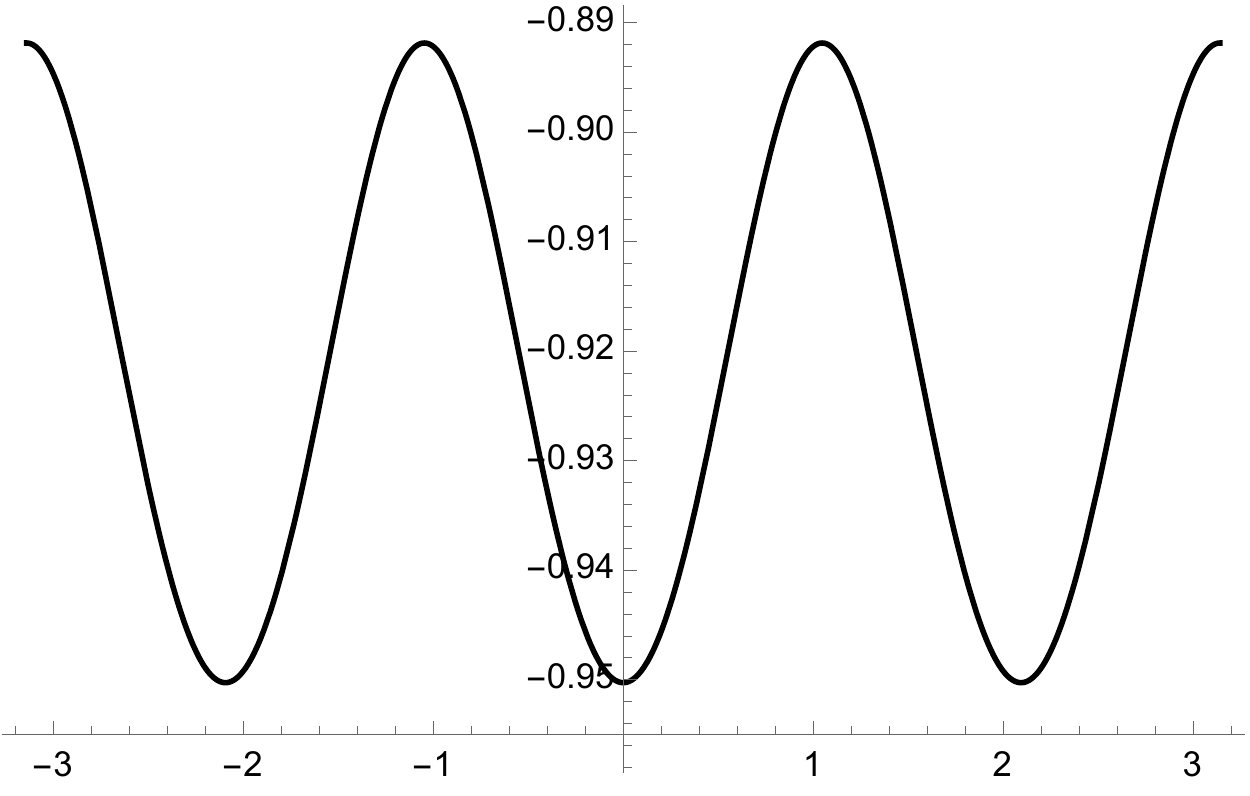}
\put(90,60){\scriptsize$g$}\end{overpic}
&\begin{overpic}[bb=0 0 360 229, width=0.45\textwidth,clip]{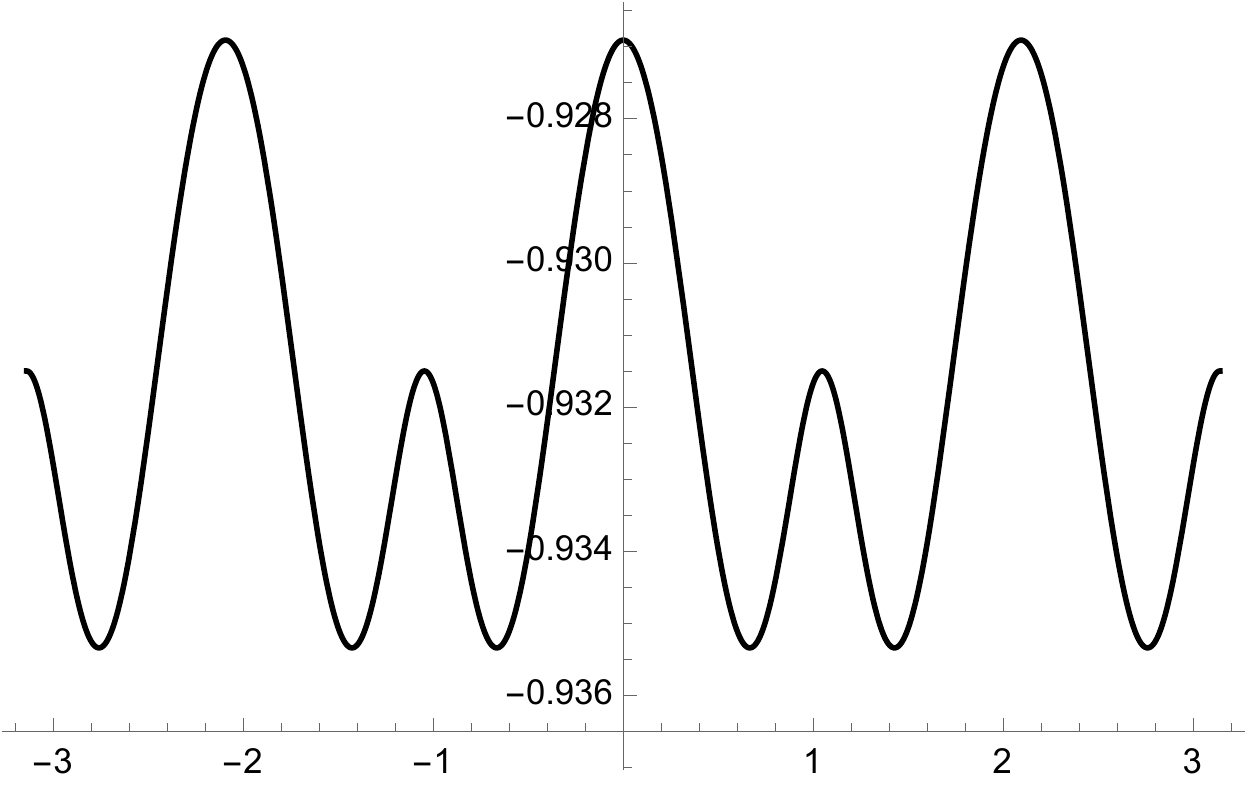}
\put(90,60){\scriptsize$\cP g$}\end{overpic}\bigskip\\
\begin{overpic}[bb=0 0 360 229, width=0.45\textwidth,clip]{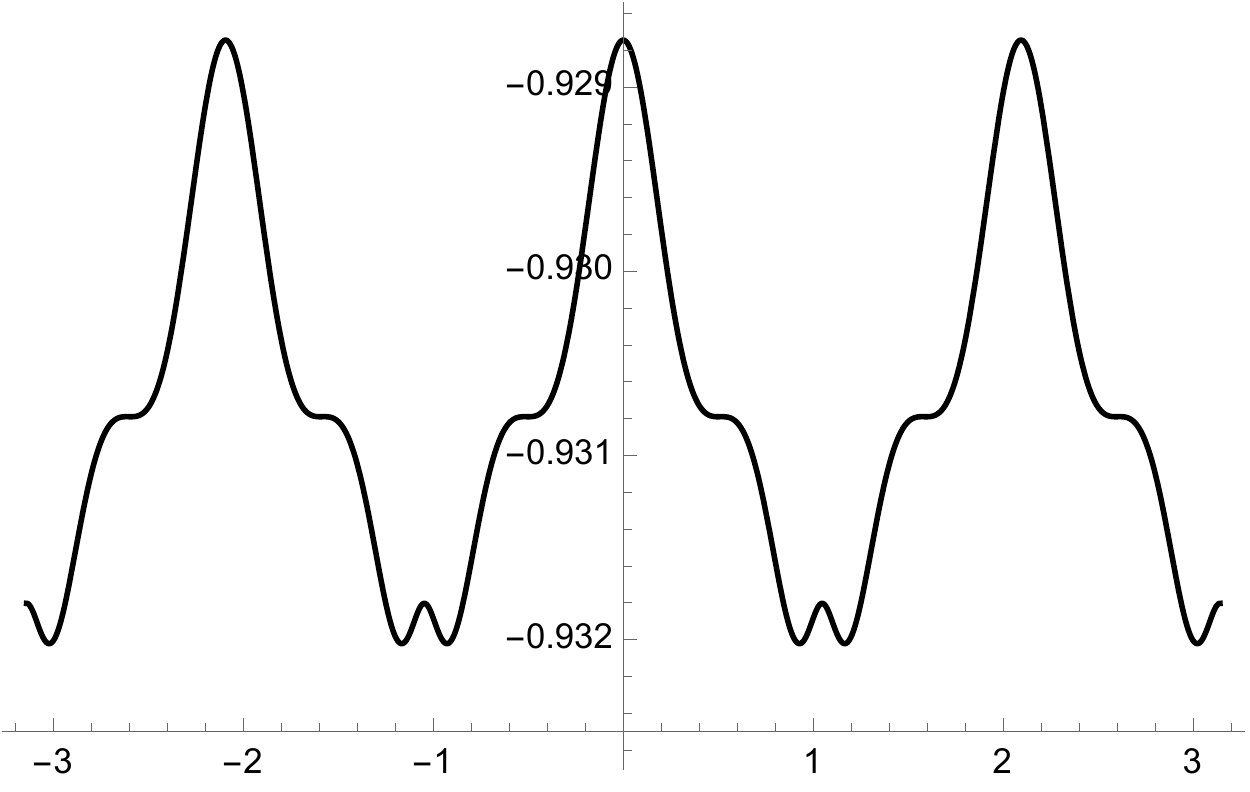}
\put(90,60){\scriptsize$\cP^2g$}\end{overpic}
&\begin{overpic}[bb=0 0 360 229, width=0.45\textwidth,clip]{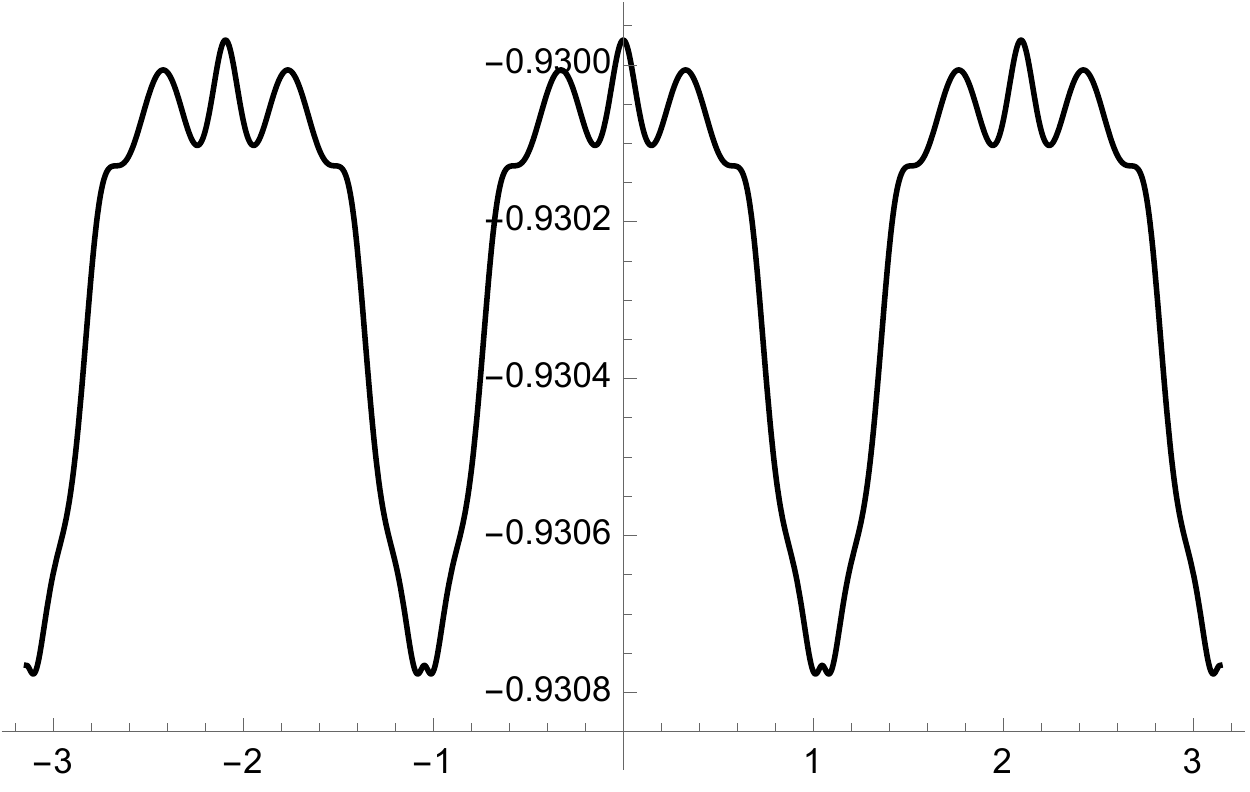}
\put(90,60){\scriptsize$\cP^3 g$}\end{overpic}\bigskip\\
\begin{overpic}[bb=0 0 360 229, width=0.45\textwidth,clip]{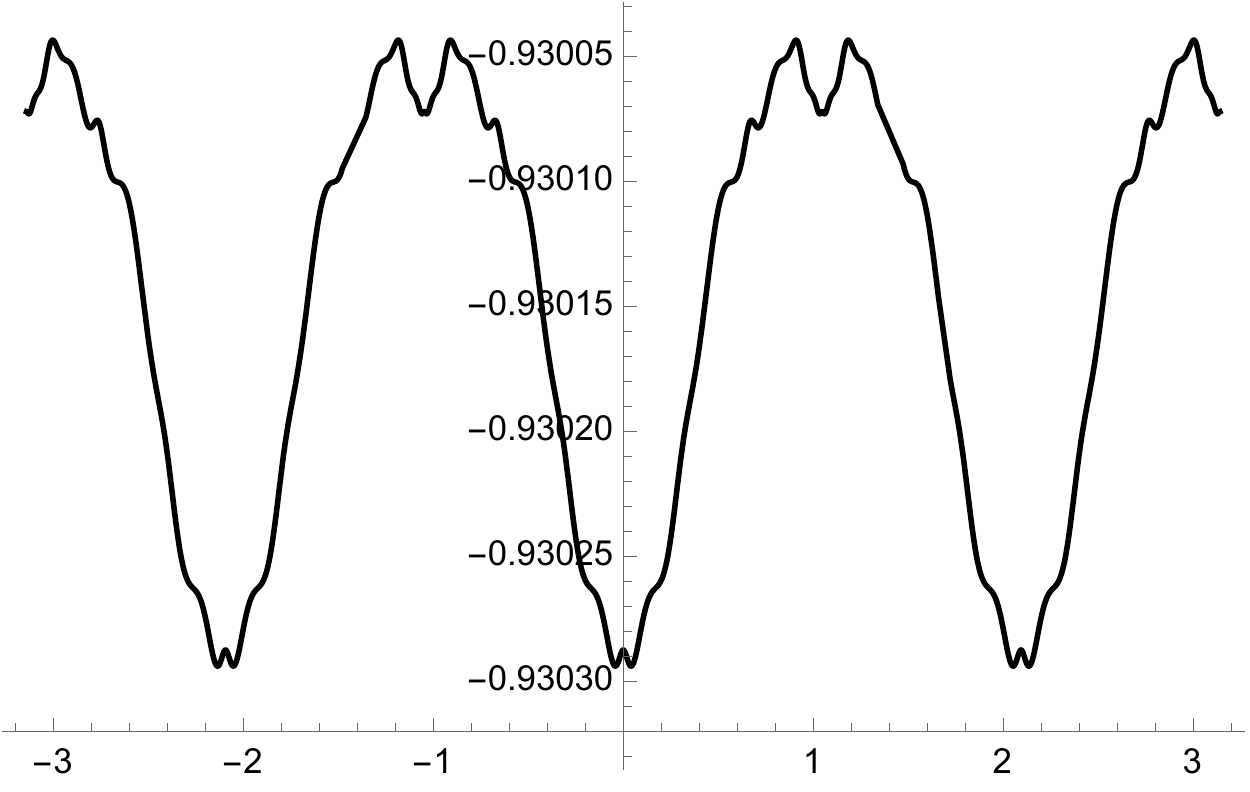}
\put(92,61){\scriptsize$\cP^4g$}\end{overpic}
&\begin{overpic}[bb=0 0 360 229, width=0.45\textwidth,clip]{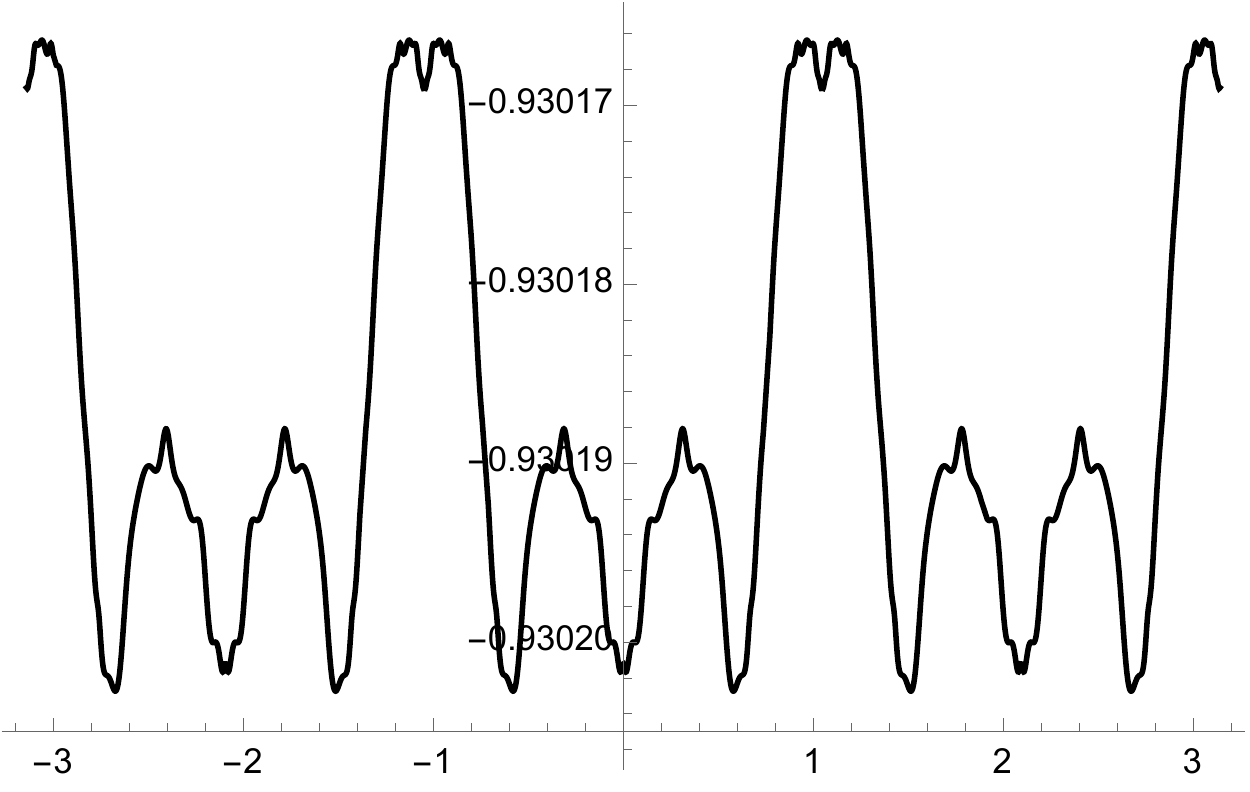}
\put(92,61){\scriptsize$\cP^5 g$}\end{overpic}\\
\end{tabular}
\caption{Graphs of $\cP^n g$, where the horizontal axis represents the argument $\theta\in(-\pi,\pi]$.}\label{fig:2}
\end{figure}
\begin{table}[t]
\centering
\caption{Upper and lower estimates of $\rho$ and $\dsloc$ based on \Thm{main}.}\label{table:1}
\begin{tabular}{c@{\qquad}c@{\qquad}c}
\hline\noalign{\smallskip}
$n$& Estimates of $\rho$ & Estimates of $\dsloc$\\
\noalign{\smallskip}\svhline\noalign{\smallskip}
$0$ & $-0.9502705\cdots\le\rho\le-0.8918673\cdots$ & $1.271650\cdots\le \dsloc\le 1.300763\cdots$\\
$1$ & $-0.9353387\cdots\le\rho\le-0.9269092\cdots$ & ${1.289402\cdots}\le \dsloc\le {1.293544\cdots}$\\
$2$ & $-0.9320224\cdots\le\rho\le-0.9287450\cdots$ & ${1.290308\cdots}\le \dsloc\le {1.291920\cdots}$\\
$3$ & $-0.9307764\cdots\le\rho\le-0.9299684\cdots$ & ${1.290911\cdots}\le \dsloc\le {1.291308\cdots}$\\
$4$ & $-0.9302937\cdots\le\rho\le-0.9300433\cdots$ & ${1.290947\cdots}\le \dsloc\le {1.291071\cdots}$\\
$5$ & $-0.9302027\cdots\le\rho\le-0.9301663\cdots$ & ${1.291008\cdots}\le \dsloc\le {1.291026\cdots}$\\
\noalign{\smallskip}\hline
\end{tabular}
\end{table}

The functions $\cP^n g$ are explicitly described in theory.
Fig.~\ref{fig:2} shows graphs of $\cP^n g$ on $\partial\bD$ for $0\le n\le 5$, where $\partial\bD$ is identified with the interval $(-\pi,\pi]$ via the map
\begin{equation}\label{eq:polar}
\phi\colon(-\pi,\pi] \ni \te\mapsto
\begin{pmatrix}1/3 \\ 1/3 \\ 1/3\end{pmatrix}
+\frac{2\cos\theta}{15}\begin{pmatrix}-1\\2\\-1\end{pmatrix}
+\frac{2\sqrt{3}\sin\theta}{15}\begin{pmatrix}1\\0\\-1\end{pmatrix}
\in \partial\bD.
\end{equation}
Table~\ref{table:1} gives the results of some numerical calculations by \textit{Mathematica}.%
\footnote{We used the command {\tt NMaxValue} to obtain the maximum and minimum of $\cP^n g$.}
According to these computations, Eq.~\Eq{main2} holds numerically; in particular, the first few digits of $\dsloc$ are $1.2910\cdots$, a value that happens to be close to $\sqrt{5/3}=1.290994\cdots$.

For reference, we provide a rigorous proof for the estimate of $\cP^0 g\,(=g)$, which implies Eq.~\Eq{130}. Even such an estimate ensures that $\dsloc$ is less than $\ds=1.36521\cdots$ (see \Cor{ds} below), which was previously unconfirmed.
\begin{theorem}\label{th:main1}
It holds that 
\begin{equation}\label{eq:estimate_g1}
\min_{x\in\partial\bD}g(x)=g(\phi(0))=\frac35\log 3-\log 5
\end{equation}
and 
\begin{equation}\label{eq:estimate_g2}
\max_{x\in\partial\bD}g(x)=g\left(\phi\left(\frac{\pi}3\right)\right)=\frac{14}{15}\log 7-\log 15.
\end{equation}
Consequently, we have
\[
2-\frac{2\log(5/3)}{\log(5/3)-g(\phi(\pi/3))}\le\dsloc\le 2-\frac{2\log(5/3)}{\log(5/3)-g(\phi(0))},
\]
that is, Eq.~\Eq{130} holds.
\end{theorem}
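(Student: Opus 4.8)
The plan is to treat \eqref{eq:estimate_g1} and \eqref{eq:estimate_g2} as a constrained optimization problem and to locate \emph{all} critical points of $g$ on $\partial\bD$ at once, rather than studying the single-variable function $\te\mapsto g(\phi(\te))$ and trying to prove it is monotone on a fundamental domain of the symmetry. I would write the constraint set as $\partial\bD=\{x\in H\mid \sum_{j\in S}x_j^2=11/25\}$, where the second equation is obtained from $\sum_{j\in S}x_j=1$ by expanding $\sum_{j\in S}(x_j-1/3)^2=8/75$. Two preliminary facts make the analysis clean: from the explicit parametrization \eqref{eq:polar} (and the cyclic symmetry) each coordinate on $\partial\bD$ lies in $[1/15,3/5]$, so $g$ is smooth on a neighborhood of $\partial\bD$; and the two constraint gradients $(1,1,1)$ and $2\,(x_1,x_2,x_3)$ are linearly independent on $\partial\bD$ because no point there is a scalar multiple of $(1,1,1)$. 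Hence the Lagrange multiplier theorem applies at any extremizer.

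The key step is the following reduction. At an extremizer there exist $\lm,\mu$ with $\log x_j+1=\lm+2\mu x_j$ for every $j\in S$, i.e.\ each $x_j$ is a positive root of $t\mapsto \log t-2\mu t-(\lm-1)$. Since $t\mapsto \log t-2\mu t$ is strictly concave, this equation has at most two positive solutions, so among $x_1,x_2,x_3$ at most two distinct values occur. As the center $(1/3,1/3,1/3)\notin\partial\bD$, the three coordinates cannot all coincide, so exactly two of them are equal. Writing the repeated value as $s$ and the third coordinate as $t=1-2s$, the constraint $2s^2+t^2=11/25$ becomes $75s^2-50s+7=0$, whose roots are $s=1/5$ and $s=7/15$. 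Thus, up to permutation, the only critical points are $(3/5,1/5,1/5)$ and $(1/15,7/15,7/15)$.

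It then remains to evaluate and compare. A direct computation gives $g(1/5,1/5,3/5)=\frac35\log3-\log5$ and $g(7/15,7/15,1/15)=\frac{14}{15}\log7-\log15$, and since the continuous function $g$ attains its extrema on the compact set $\partial\bD$, these two numbers are exactly $\min_{\partial\bD}g$ and $\max_{\partial\bD}g$ in some order. Their difference equals $\frac1{15}(14\log7-24\log3)=\frac1{15}\log(7^{14}/3^{24})$, so the ordering is settled rigorously by the integer inequality $7^{14}>3^{24}$; this gives $\frac35\log3-\log5<\frac{14}{15}\log7-\log15$, identifying the minimum with $g(\phi(0))$ and the maximum with $g(\phi(\pi/3))$. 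Substituting these values into \eqref{eq:estimate2} with $n=0$, where $\cP^0g=g$, then yields \eqref{eq:130}.

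The main obstacle is really only the first conceptual step: converting the transcendental Lagrange conditions into the finite statement ``at most two distinct coordinate values,'' which is exactly where the concavity of $\log$ does the work and which makes the enumeration of critical points possible. Everything afterwards—solving one quadratic and comparing two explicit logarithms—is routine, and the sole remaining non-mechanical point is to certify $7^{14}>3^{24}$, which can be done by direct multiplication.
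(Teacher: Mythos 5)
Your proposal is correct, and it takes a genuinely different route from the paper. The paper works entirely in the angular variable: it writes out $g(\phi(\te))$, uses the periodicity and reflection symmetry $g(\phi(\te))=g(\phi(\frac{2\pi}3\pm\te))$ to reduce to the fundamental domain $[0,\pi/3]$, and then proves monotonicity there, showing $\frac{d}{d\te}(g\circ\phi)\ge0$ via the substitutions $x=\frac25\cos\te$, $y=\frac{2\sqrt3}{15}\sin\te$ and the elementary bounds $\log\frac{1+\a}{1-\a}\ge2\a$, $\log(1+\b)\le\b$; the extrema then sit at the endpoints $\te=0$ and $\te=\pi/3$. You instead treat $\partial\bD$ as the intersection of the plane $\sum_j x_j=1$ with the sphere $\sum_j x_j^2=11/25$, verify the constraint qualification, and use the strict concavity of $t\mapsto\log t-2\mu t$ to convert the Lagrange conditions into the combinatorial statement that a critical point has at most two distinct coordinates; solving $75s^2-50s+7=0$ then enumerates \emph{all} critical points as the permutations of $(3/5,1/5,1/5)$ and $(1/15,7/15,7/15)$, and the ordering of the two candidate values reduces to the integer inequality $7^{14}>3^{24}$ (equivalently $7^7=823543>531441=3^{12}$). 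Note that identifying these orbits with $\phi(0)=(1/5,3/5,1/5)$ and $\phi(\pi/3)=(7/15,7/15,1/15)$ uses the permutation invariance of $g$, which you should state explicitly. What each approach buys: yours is more systematic — it classifies every critical point rather than relying on a somewhat ad hoc derivative estimate, and it would extend directly to other permutation-symmetric functions on $\partial\bD$; the paper's argument stays within one-variable calculus and yields strictly more information, namely that $g\circ\phi$ is monotone on $[0,\pi/3]$ (consistent with the graphs in Fig.~\ref{fig:2}), not merely that its extrema occur at $\te=0$ and $\te=\pi/3$. Both proofs conclude identically by feeding the two values into \Thm{main} with $n=0$.
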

\begin{proof}
First, we note from \Eq{g} and \Eq{polar} that
\begin{align*}
g(\phi(\theta))
&=\left(\frac13-\frac2{15}\cos\theta+\frac{2\sqrt3}{15}\sin\theta\right)\log\left(\frac13-\frac2{15}\cos\theta+\frac{2\sqrt3}{15}\sin\theta\right)\\
&\quad+\left(\frac13+\frac{4}{15}\cos \theta\right)\log\left(\frac13+\frac{4}{15}\cos \theta\right)\\
&\quad+\left(\frac13-\frac2{15}\cos\theta-\frac{2\sqrt3}{15}\sin\theta\right)\log\left(\frac13-\frac2{15}\cos\theta-\frac{2\sqrt3}{15}\sin\theta\right).
\end{align*}
Because we can easily check the periodicity and symmetry of $g(\phi(\theta))$: 
\[
g(\phi(\theta))=g\left(\phi\left(\frac{2\pi}3+\theta\right)\right)=g\left(\phi\left(\frac{2\pi}3-\theta\right)\right),
\]
it suffices to prove that $\frac{d}{d\theta}(g(\phi(\theta)))\ge0$ for $\theta\in[0,\pi/3]$ for the validity of \Eq{estimate_g1} and \Eq{estimate_g2}.
From direct computation, we have
\begin{align*}
\frac{d}{d\theta}(g(\phi(\theta)))
&=\frac1{\sqrt3}(-x+y)\log\left(\frac13-\frac x3-y\right)
-\frac{2y}{\sqrt3}\log\left(\frac13+\frac{2x}3\right)\\
&\quad+\frac1{\sqrt3}(x+y)\log\left(\frac13-\frac x3+y\right),
\end{align*}
where
\[
x=\frac{2}5 \cos\theta
\quad\text{and}\quad 
y=\frac{2\sqrt3}{15}\sin\theta.
\]
Note that $0\le y\le 1/5\le x\le 2/5$ for $\theta\in[0,\pi/3]$.
By letting 
\[
\a=\frac{3 y}{1-x}
\quad\text{and}\quad 
\b=\frac{3(x-y)}{1-x+3y},
\]
it holds that 
\[
\frac{d}{d\theta}(g(\phi(\theta)))
=\frac1{\sqrt3}(x-y)\log\frac{1+\a}{1-\a}-\frac{2y}{\sqrt3}\log(1+\b).
\]
We now use the general inequalities
\[
\log\frac{1+\a}{1-\a}\ge2\a
\quad\text{and}\quad 
\log(1+\b)\le\b
\]
for $\a\in[0,1)$ and $\b\ge0$ to obtain that
\begin{align*}
\frac{d}{d\theta}(g(\phi(\theta)))
&\ge \frac2{\sqrt3}(x-y)\a-\frac{2y}{\sqrt3}\b\\
&=2\sqrt3 (x-y)y\left(\frac1{1-x}-\frac1{1-x+3y}\right)\\
&\ge0.
\end{align*}
Note that the last inequality becomes equality only if $y=0$ or $x=y$, that is, when $\theta=0$ or $\pi/3$. We can confirm that $\frac{d}{d\theta}(g\circ\phi)(0)=\frac{d}{d\theta}(g\circ\phi)(\pi/3)=0$, and the remaining claims follow from \Thm{main}.
\end{proof}
\begin{corollary}\label{cor:ds}
$\dsloc<\ds$.
\end{corollary}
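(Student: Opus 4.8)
The plan is to deduce the corollary directly from the upper bound on $\dsloc$ already established in \Thm{main1}, rather than from any fresh estimate of $\rho$. By \Eq{130} we have $\dsloc\le\frac{5\log 5-3\log 3}{5\log 5-4\log 3}$, while by definition $\ds=2\log_5 3=\frac{2\log 3}{\log 5}$. Thus it suffices to verify the purely numerical inequality $\frac{5\log 5-3\log 3}{5\log 5-4\log 3}<\frac{2\log 3}{\log 5}$, which involves no probabilistic input at all.

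To this end I would abbreviate $a=\log 3$ and $b=\log 5$ and first confirm that both denominators are positive: $b>0$ trivially, and $5b-4a=\log(5^5)-\log(3^4)=\log(3125/81)>0$. Cross-multiplying the target inequality (which is legitimate precisely because both denominators are positive) then yields the equivalent polynomial inequality $b(5b-3a)<2a(5b-4a)$, that is, $5b^2-13ab+8a^2<0$.

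The final step is to factor the quadratic form as $5b^2-13ab+8a^2=(b-a)(5b-8a)$ and read off the signs of the two factors. Here $b-a=\log(5/3)>0$, whereas $5b-8a=\log(5^5/3^8)=\log(3125/6561)<0$ since $5^5=3125<6561=3^8$; hence the product is negative, which is exactly the desired inequality. I do not anticipate any genuine obstacle: the argument is elementary once the upper bound of \Thm{main1} is in hand, and the only points requiring care are the positivity of the denominators (so that cross-multiplication preserves the direction of the inequality) and the single arithmetic comparison $5^5<3^8$, which is what ultimately separates $\dsloc$ from $\ds$.
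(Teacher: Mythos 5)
Your proposal is correct and follows essentially the same route as the paper: both deduce the corollary from the upper bound in \Eq{130} and reduce the resulting inequality, by elementary algebra with the positivity of the denominators, to the single arithmetic fact $3^8=6561>3125=5^5$. The only cosmetic difference is that you work with $a=\log 3$, $b=\log 5$ and factor the quadratic form $(b-a)(5b-8a)$, whereas the paper sets $a=\log_5 3$ and reduces to $8a>5$; these are the same computation.
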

\begin{proof}
In view of \Eq{130}, it suffices to prove
\[
\frac{5\log 5-3\log 3}{5\log 5-4\log 3}<2\log_5 3.
\]
By letting $a=\log_5 3<1$, this inequality is equivalent to $(5-3a)/(5-4a)<2a$, that is, $8a>5$.
This is equivalent to $3^8>5^5$, which is true because $3^8=6561$ and $5^5=3125$.
\end{proof}
\begin{acknowledgement}
This work was supported by JSPS KAKENHI Grant Numbers 19H00643 and 19K21833. 
\end{acknowledgement}

\end{document}